\documentclass{birkjour}

\RequirePackage[strict]{csquotes}
\RequirePackage[initials,non-compressed-cites]{amsrefs}

\usepackage{upref}

\usepackage{amsmath,amsthm,amscd,amssymb,siunitx}
\usepackage{braket,mleftright}
\DeclareMathAlphabet\mathrsfso{U}{rsfso}{m}{n}

\usepackage{ragged2e}
\justifying
\fussy

\newcounter{smalllist}
\newenvironment{SL}{\begin{list}{{\rm\roman{smalllist})}}{%
\setlength{\topsep}{0mm}\setlength{\parsep}{0mm}\setlength{\itemsep}{0mm}%
\setlength{\labelwidth}{2em}\setlength{\leftmargin}{2em}\usecounter{smalllist}%
}}{\end{list}}

\newcommand{\ie}{\textit{i.e.}\;}
\newcommand{\eg}{\textit{e.g.}\;}
\newcommand{\cf}{\textit{cf.}\;}

\newcommand{\bbC}{\mathbb{C}}

\newcommand{\what}[1]{\widehat{#1}}		
\newcommand{\mrm}[1]{\mathrm{#1}}		
\newcommand{\co}{\colon}				
\renewcommand{\vrt}{\,\vert\,}			
\newcommand{\lto}{\rightarrow}			
\newcommand{\abs}[1]{\lvert#1\rvert}	
\newcommand{\norm}[1]{\lVert#1\rVert}	
\newcommand{\img}{\mrm{i}}				
\newcommand{\op}{\oplus}				
\newcommand{\om}{\ominus}				
\newcommand{\hsum}{\,\what{+}\,}		
\newcommand{\hop}{\,\what{\op}\,}		
\newcommand{\dsum}{\dotplus}			

\DeclareMathOperator{\im}{Im}   
\DeclareMathOperator{\Ind}{Ind}   
\DeclareMathOperator{\Ker}{Ker}   

\newcommand{\msH}{\mathrsfso{H}}
\newcommand{\msD}{\mathrsfso{D}}
\newcommand{\msR}{\mathrsfso{R}}
\newcommand{\msN}{\mathrsfso{N}}
\newcommand{\msL}{\mathrsfso{L}}
\newcommand{\msB}{\mathrsfso{B}}
\newcommand{\msE}{\mathrsfso{E}}

\newcommand{\vp}{\varphi}

\newcommand{\hx}{\hat{x}}
\newcommand{\hy}{\hat{y}}
\newcommand{\hu}{\hat{u}}
\newcommand{\hv}{\hat{v}}

\newcommand{\ty}{\tilde{y}}
\newcommand{\tV}{\tilde{V}}

\newcommand{\setm}{\smallsetminus}		

\newtheorem{thm}{Theorem}
\newtheorem{lem}[thm]{Lemma}
\newtheorem{prop}[thm]{Proposition}
\theoremstyle{remark}
\newtheorem{rem}[thm]{Remark}
\newtheorem{rems}[thm]{Remarks}

\begin{document}

\title{On a boundary pair
of a dissipative operator}

\author{Rytis Jur\v{s}\.{e}nas}

\address{Vilnius University,
Institute of Theoretical Physics and Astronomy,
Saul\.{e}tekio ave.~3, 10257 Vilnius, Lithuania}

\email{rytis.jursenas@tfai.vu.lt}

\keywords{Dissipative operator, boundary pair, unitary boundary pair, Krein space}

\subjclass{47B50, 47B44, 46C20}

\date{\today}

\begin{abstract}
The aim of this brief note is to demonstrate that
the boundary pair of a dissipative
operator is determined by the unitary boundary
pair of its symmetric part.
\end{abstract}

\maketitle
\section{Introduction}
Let $T$ be a maximal dissipative operator
in a Hilbert space
$(\msH,\braket{\,\,\cdot\,\,,\,\,\cdot\,\,})$.
According to \cite[Lemma~3.1]{Brown20} there is a Hilbert space
$(\msE,\braket{\,\,\cdot\,\,,\,\,\cdot\,\,}_\msE)$
and
an operator $\Gamma_{01}\co\msH\lto \msE$,
$\msD_{\Gamma_{01}}=\msD_T$, which is bounded in graph norm
of $T$, has dense range in $\msE$,
and such that
\begin{equation}
\braket{x\,,\,Ty}-\braket{Tx\,,\,y}=\img
\braket{\Gamma_{01} x\,,\,\Gamma_{01} y}_\msE
\quad(x,y\in\msD_T)\,.
\label{eq:bvs}
\end{equation}
And similarly for the maximal dissipative
operator $(-T^*)$.

Following \cite{Arlinskii24,Arlinskii23}
the pair $(\msE,\Gamma_{01})$ is a boundary pair of $T$.

Here we show that:
\begin{SL}
\item[a)]
The maximality condition and the density of
the domain of $T$ are not needed for
\eqref{eq:bvs} to hold.
\item[b)]
$(\msH,\braket{\,\,\cdot\,\,,\,\,\cdot\,\,})$
can be replaced by the Krein space
$(\msH,[\,\,\cdot\,\,,\,\,\cdot\,\,])$.
\item[c)]
Up to an isometric isomorphism $\msE$
is determined by the closure of the sesquilinear
form $\gamma_N$.
\item[d)]
Given $\msE$,
the operator $\Gamma_{01}$ is defined
uniquely by the
boundary value space of the symmetric part of $T$.
\end{SL}

With $T$ in \eqref{eq:bvs} one associates the
symmetric positive and closable form
\[
\gamma_T[x\,,\,y]=-\img
( \braket{x\,,\,Ty}-\braket{Tx\,,\,y} )
\quad\text{on}\quad\msD_T\,.
\]
The quadratic form
$\gamma_T[x]=\gamma_T[x\,,\,x]$. Its kernel,
$\{x\in\msD_T\vrt\im\braket{x\,,\,Tx}=0\}$,
defines the symmetric part $S$ of $T$,
whose orthogonal complement in $T$ is the dissipative
restriction $N=T\vrt(T+\img I)^{-1}(\msN_\img)$, where
$\msN_\img=\Ker_\img S^*$ is the deficiency subspace.
Since the positive closable form
$\gamma_N=\gamma_T$ on $\msD_N$ has trivial kernel,
$(\msD_N,(\,\,\cdot\,\,,\,\,\cdot\,\,)_N)$
with scalar product
$(\,\,\cdot\,\,,\,\,\cdot\,\,)_N=
\gamma_N[\,\,\cdot\,\,,\,\,\cdot\,\,]$
becomes an inner product space; its completion
$(\msD_{\bar{\gamma}_N},(\,\,\cdot\,\,,\,\,\cdot\,\,)^\sim_N)$,
where $\bar{\gamma}_N$ is the closure of $\gamma_N$,
is isometrically isomorphic to $\msE$,
the latter being
an intrinsic completion of the positive $G$-space
$\Gamma_s(\msD_T)$ in the Krein
$\bigl(\begin{smallmatrix}0 & -\img I^\prime \\
\img I^\prime & 0\end{smallmatrix} \bigr)$-space
$\msH^\prime\op\msH^\prime$, where
$\Gamma_s$ is the operator part of
a boundary relation $\Gamma$ in
a unitary boundary pair $(\msH^\prime,\Gamma)$ for $S^*$,
with the closed domain $\msD_\Gamma=S^*$, and
which in operator case is identified with $\msD_{S^*}$;
such a boundary relation exists by (the proof of)
\cite[Proposition~3.7]{Derkach06}.

Given $\msE$ in \eqref{eq:bvs}
as described above, the operator
$\Gamma_{01}=V\Gamma_s\vrt\msD_T$, where
the operator $V$
is closed continuous and semi-unitary from the
pre-Hilbert space $\Gamma_s(\msD_T)$ to $\msE$,
and which maps $\Gamma_s(\msD_T)$ bijectively onto a dense
lineal in $\msE$;
the existence of $V$ is due to the boundedness of
the form $\gamma_T$ in graph norm.
Letting in particular $\msE=\msD_{\bar{\gamma}_N}$
the operator
\[
\Gamma_{01}=(T+\img I)^{-1}P_\img(T+\img I)
\]
up to unitary equivalence in $\msE$;
$P_\img$ is an orthogonal projection in
$\msH$ onto $\msN_\img$. (In this case the
operator $\sqrt{F_b}=(T+\img I)^{-1}P_\img$
in the proof of \cite[Lemma~3.1]{Brown20}.)

Formalized statements a)--d) are now given in the next
section.
\section{Main theorem}
Let $(\msH,[\,\,\cdot\,\,,\,\,\cdot\,\,])$
be a Krein space; an inner
product is conjugate-linear in its first factor.
Let the space of graphs $\msH_\Gamma=\msH\op\msH$
be the Krein space
$(\msH_\Gamma,[\,\,\cdot\,\,,\,\,\cdot\,\,]_\Gamma)$
with an indefinite metric (\cf \cite[Section~2.1]{Azizov89})
\[
[(x_1,y_1)\,,\,(x_2,y_2)]_\Gamma=-\img
([x_1\,,\,y_2]-[y_1\,,\,x_2])\,.
\]

Given a canonical symmetry $J$ in $\msH$,
$(\msH_\Gamma,[\,\,\cdot\,\,,\,\,\cdot\,\,]_\Gamma)$
is a $J_\Gamma$-space with the canonical symmetry
$J_\Gamma(x,y)=(-\img Jy,\img Jx)$.

Given a lineal (\ie a linear subset) $T$ in $\msH_\Gamma$
(\ie a relation in $\msH$), its
$J_\Gamma$-orthogonal complement
(\ie the $J$-adjoint in $\msH$) is denoted by $T^c$;
if $\msH$ is a Hilbert space, $T^c=T^*$.

Although here we mainly work with operators,
it is convenient in many cases to identify
an operator $T$ with its graph, a relation
with trivial indefiniteness (multivalued part)
$\Ind T$.

The domain, kernel, range
of a relation (operator) $T$ are denoted by
$\msD_T$, $\Ker T$ (also $\Ker_\lambda T=\Ker(T-\lambda I)$,
$\lambda\in\bbC$), $\msR_T$
respectively. If necessary,
the notions extend naturally to
relations (operators) acting from one space to another
space.

With an operator $T$ in $\msH$
one associates the symmetric sesquilinear form
\[
\gamma_T[x\,,\,y]=-\img
( [x\,,\,Ty]-[Tx\,,\,y] )\quad\text{on}\quad
\msD_{\gamma_T}=\msD_T
\]
and the corresponding quadratic
form $\gamma_T[x]=\gamma_T[x\,,\,x]$.

An operator $T$ in
$(\msH,[\,\,\cdot\,\,,\,\,\cdot\,\,])$
is dissipative
(symmetric) if $\im[x\,,\,Tx]\geq0$
($\im[x\,,\,Tx]=0$) for all $x\in\msD_T$.
An operator $T$ is dissipative (symmetric) iff
a lineal $T$ in $\msH_\Gamma$,
\ie the graph of $T$, is non-negative
(neutral).

Any dissipative operator can be extended into
a maximal one.
A maximal dissipative operator in $\msH$
(\ie a maximal non-negative subspace in $\msH_\Gamma$)
is necessarily densely defined.
A closed dissipative operator $T$ is maximal
iff such is the operator $(-T^c)$.

Let $(\msH^\prime,
\braket{\,\,\cdot\,\,,\,\,\cdot\,\,}^\prime)$
be a Hilbert space and let the space of graphs
$\msH^\prime_\Gamma=\msH^\prime\op\msH^\prime$
be the Krein space
$(\msH^\prime_\Gamma,
[\,\,\cdot\,\,,\,\,\cdot\,\,]^\prime_\Gamma)$
with an indefinite metric
\[
[(u_1,v_1)\,,\,(u_2,v_2)]^\prime_\Gamma=-\img
(\braket{u_1\,,\,v_2}^\prime-\braket{v_1\,,\,u_2}^\prime)
\]
and canonical symmetry
$J^\prime_\Gamma(u,v)=(-\img v,\img u)$.

The main result is stated as follows.
\begin{thm}\label{thm:1}
A closed dissipative operator in
a Krein space $(\msH,[\,\,\cdot\,\,,\,\,\cdot\,\,])$
with canonical symmetry $J$ has a boundary pair
$(\msE,\Gamma_{01})$; that is,
there is a Hilbert space
$(\msE,\braket{\,\,\cdot\,\,,\,\,\cdot\,\,}_\msE)$
and
an operator $\Gamma_{01}\co\msH\lto \msE$,
$\msD_{\Gamma_{01}}=\msD_T$, which is bounded in graph norm
of $T$, has dense range in $\msE$,
and such that
\begin{equation}
[x\,,\,Ty]-[Tx\,,\,y]=\img
\braket{\Gamma_{01} x\,,\,\Gamma_{01} y}_\msE
\quad(x,y\in\msD_T)\,.
\label{eq:bvs2}
\end{equation}

Specifically:
\begin{SL}
\item[$\mathrm{a)}$]
$\Gamma_{01}$ is of the form
\[
\Gamma_{01}=V\Gamma_sE\,,
\quad
Ex=\begin{pmatrix}x \\ Tx \end{pmatrix}\,,\quad
\msD_E=\msD_T
\]
where $\Gamma_s$ is the operator part
of a boundary relation $\Gamma$
in a ubp $(\msH^\prime,\Gamma)$
for $S^c=\msD_\Gamma$, the
closed symmetric operator $S$ in $\msH$
is defined by
\[
S=T\vrt\Ker\gamma_T
\quad(\Ker\gamma_T=\{x\in\msD_T\vrt
\gamma_T[x]=0\})
\]
and $V$ is a closed
semi-unitary injective operator from the
pre-Hilbert space
$(\Gamma_s(T),[\,\,\cdot\,\,,\,\,\cdot\,\,]^\prime_\Gamma)$
to $\msE$.
\item[$\mathrm{b)}$]
Up to an isometric isomorphism $\msE$
is the completion
$(\msD_{\bar{\gamma}_N},(\,\,\cdot\,\,,\,\,\cdot\,\,)^\sim_N)$
of the pre-Hilbert space
$(\msD_N,(\,\,\cdot\,\,,\,\,\cdot\,\,)_N)$,
where
\[
N=T\vrt(JT+\img I)^{-1}(\msN_\img\cap\msR_{JT+\img I})
\,,\quad
\msN_\img=\Ker_\img JS^c\,,
\]
\[
(x\,,\,y)_N=\gamma_N[x\,,\,y]\quad
(x,y\in\msD_N)\,;\quad
\gamma_N=\gamma_T\quad\text{on}\quad\msD_N
\]
and $\bar{\gamma}_N$ is the closure of
the positive sesquilinear form
$\gamma_N$.
\item[$\mathrm{c)}$]
Particularly therefore one can always choose
the boundary pair with
$\msE=\msD_{\bar{\gamma}_N}$ and
\[
\Gamma_{01}=U(JT+\img I)^{-1}P_\img(JT+\img I)
\]
where $P_\img$ is an orthogonal projection in
$(\msH,[\,\,\cdot\,\,,\,\,J\,\,\cdot\,\,])$
onto $\msN_\img\cap\msR_{JT+\img I}$,
and $U$ is a closed semi-unitary operator
$(\msD_N,(\,\,\cdot\,\,,\,\,\cdot\,\,)_N)\lto
(\msD_{\bar{\gamma}_N},(\,\,\cdot\,\,,\,\,\cdot\,\,)^\sim_N)$,
which maps $\msD_N$ bijectively onto a dense lineal in
$\msD_{\bar{\gamma}_N}$.
\end{SL}
\end{thm}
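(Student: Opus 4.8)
The plan is to reduce everything to the closed symmetric operator $S=T\vrt\Ker\gamma_T$ and its $J$-adjoint $S^c$, and to manufacture the boundary pair from a unitary boundary pair of $S^c$. Throughout I would use the dictionary with the Hilbert space $(\msH,(\,\cdot\,,\,\cdot\,)_J)$, $(\,\cdot\,,\,\cdot\,)_J:=[\,\cdot\,,\,J\,\cdot\,]$: $T$ is dissipative (symmetric) in $\msH$ iff $JT$ is dissipative (symmetric) in $(\msH,(\,\cdot\,,\,\cdot\,)_J)$; $JS^c=(JS)^*$, so $\msN_\img=\Ker_\img JS^c$ is the deficiency subspace of the symmetric operator $JS$; and $\gamma_T$ is a positive symmetric form with $\gamma_T[x]=2\im[x\,,\,Tx]$. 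Since $[\,\cdot\,,\,\cdot\,]$ is bounded and $T$ is closed, $\Ker\gamma_T$ is closed in the graph norm, so $S$ is closed; and by the Cauchy--Schwarz inequality for $\gamma_T$, for $x\in\msD_S$ and any $y\in\msD_T$ one has $\gamma_T[x\,,\,y]=0$, i.e.\ $[x\,,\,Ty]=[Tx\,,\,y]=[Sx\,,\,y]$, hence $T\subseteq S^c$ and $Ey=(y,Ty)\in S^c=\msD_\Gamma$ for every $y\in\msD_T$.

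For a), I would take a ubp $(\msH^\prime,\Gamma)$ for $S^c$ with $\msD_\Gamma=S^c$ (the input cited from \cite{Derkach06}) with operator part $\Gamma_s$, so that $\Gamma_sE$ is defined on $\msD_T$. Using that $\Gamma$ is isometric from $(\msH_\Gamma,[\,\cdot\,,\,\cdot\,]_\Gamma)$ to $(\msH^\prime_\Gamma,[\,\cdot\,,\,\cdot\,]^\prime_\Gamma)$ and that $\Ind\Gamma$ is neutral (so $[\,\cdot\,,\,\cdot\,]^\prime_\Gamma$ passes to $\Gamma_s$), one gets for $x,y\in\msD_T$
\[
[\Gamma_sEx\,,\,\Gamma_sEy]^\prime_\Gamma=[Ex\,,\,Ey]_\Gamma=-\img([x\,,\,Ty]-[Tx\,,\,y])=\gamma_T[x\,,\,y]\,.
\]
Since $\Ker\Gamma_s=\Ker\Gamma=S$, the kernel of $\Gamma_sE$ is exactly $\msD_S=\Ker\gamma_T$, so by the displayed identity $(\Gamma_s(T),[\,\cdot\,,\,\cdot\,]^\prime_\Gamma)$ is a positive definite pre-Hilbert space. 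I would then let $\msE$ be any Hilbert completion of it, $V\co\Gamma_s(T)\to\msE$ the canonical embedding --- closed, semi-unitary, injective, dense-range --- and put $\Gamma_{01}:=V\Gamma_sE$. Then $\img\braket{\Gamma_{01}x\,,\,\Gamma_{01}y}_\msE=\img\gamma_T[x\,,\,y]=[x\,,\,Ty]-[Tx\,,\,y]$, while $\norm{\Gamma_{01}x}_\msE^2=\gamma_T[x]=2\im[x\,,\,Tx]\le\norm{x}^2+\norm{Tx}^2$ gives boundedness in the graph norm and $\msR_{\Gamma_{01}}=V(\Gamma_s(T))$ is dense. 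Conversely, any boundary pair $(\msE,\Gamma_{01})$ forces $\braket{\Gamma_{01}x\,,\,\Gamma_{01}y}_\msE=\gamma_T[x\,,\,y]$, whence $\msD_S\subseteq\Ker\Gamma_{01}$ and $V\co\Gamma_sEx\mapsto\Gamma_{01}x$ is a well-defined closed semi-unitary injective dense-range operator --- the asserted form.

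For b), the identity above shows that $\msE$ is, up to isometric isomorphism, the completion of the pre-Hilbert quotient $(\msD_T,\gamma_T)/\Ker\gamma_T$, so it suffices to prove $\msD_T=\msD_S\dsum\msD_N$ as lineals: then $\gamma_T\vrt\msD_N=\gamma_N$ has trivial kernel (as $\msD_S\cap\msD_N=\{0\}$), $\msD_N\to(\msD_T,\gamma_T)/\Ker\gamma_T$ is an isometric isomorphism, and passing to completions identifies $\msE$ with $(\msD_{\bar{\gamma}_N},(\,\cdot\,,\,\cdot\,)^{\sim}_N)$. The decomposition I would obtain from the von Neumann formula $\msD_{S^c}=\msD_S\dsum\msN_\img\dsum\msN_{-\img}$ (for $JS$) together with the range identity
\[
\msR_{JT+\img I}=(JS+\img I)\msD_S\op(\msN_\img\cap\msR_{JT+\img I})\quad\text{orthogonal in}\quad(\msH,(\,\cdot\,,\,\cdot\,)_J):
\]
for $x=x_0+n_++n_-\in\msD_T\subseteq\msD_{S^c}$ one has $(JT+\img I)x=(JS+\img I)x_0+2\img n_+$ because $S^cn_\pm=\pm\img Jn_\pm$; the Green identity $[Sx_0\,,\,n_+]=[x_0\,,\,S^cn_+]$ yields $(JS+\img I)x_0\perp_J\msN_\img$; and since $JT+\img I$ is bounded below on $\msD_T$ by dissipativity (so $\msR_{JT+\img I}$ and $\msN_\img$ are closed), a one-line Hilbert-space argument identifies the second summand with $\msN_\img\cap\msR_{JT+\img I}$. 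It then follows that $\msD_N=\{x\in\msD_T:x_0=0\}$, that $N$ is precisely the orthogonal complement of $S$ in $T$ (graphs in $\msH\op\msH$), and that the sum is direct.

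For c), I would take $\msE=\msD_{\bar{\gamma}_N}$ and set $F:=(JT+\img I)^{-1}P_\img(JT+\img I)$, which is well-defined on $\msD_T$ since $JT+\img I$ is injective there and $P_\img$ maps into $\msN_\img\cap\msR_{JT+\img I}\subseteq\msR_{JT+\img I}$. From the range identity, $(JT+\img I)(x-Fx)=(I-P_\img)(JT+\img I)x=(JS+\img I)x_0=(JT+\img I)x_0$ with $x_0\in\msD_S$, so $x-Fx=x_0\in\msD_S$ and $Fx\in\msD_N$; thus $F$ is the projection $\msD_T\to\msD_N$ along $\msD_S$. Since $x-Fx\in\msD_S=\Ker\gamma_T$, $\gamma_N[Fx\,,\,Fy]=\gamma_T[Fx\,,\,Fy]=\gamma_T[x\,,\,y]$, so with $U$ the canonical embedding $(\msD_N,(\,\cdot\,,\,\cdot\,)_N)\to(\msD_{\bar{\gamma}_N},(\,\cdot\,,\,\cdot\,)^{\sim}_N)$ the operator $\Gamma_{01}:=UF$ satisfies \eqref{eq:bvs2}, is bounded in the graph norm, and has dense range ($F$ is onto $\msD_N$, $U$ has dense range); this is the Krein-space form of $\sqrt{F_b}=(T+\img I)^{-1}P_\img$ from \cite[Lemma~3.1]{Brown20}. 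The hardest part will be securing, in this generality ($\msH$ a Krein space, $T$ neither maximal nor a priori densely defined), a unitary boundary pair of $S^c$ with $\msD_\Gamma=S^c$ and all the properties used above --- $\Gamma$ isometric, $\Ind\Gamma$ neutral, $\Ker\Gamma=S$, existence of the operator part --- via (the proof of) \cite[Proposition~3.7]{Derkach06} applied to $JS$, together with the closedness bookkeeping and the orthogonality computation behind the range identity of b) and c).
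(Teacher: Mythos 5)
Your proposal is correct in outline and follows the paper's strategy: pass to the symmetric part $S=T\vrt\Ker\gamma_T=T\cap T^c$, take a unitary boundary pair $(\msH^\prime,\Gamma)$ for $S^c=\msD_\Gamma$, use the isometry of $\Gamma$ (together with the neutrality of $\Ind\Gamma$ and $\Ker\Gamma_s=S$) to obtain $\braket{\Gamma_{01}x\,,\,\Gamma_{01}y}_\msE=\gamma_T[x,y]$, and then complete. Two differences in execution are worth noting. First, where you complete the positive pre-Hilbert space $(\Gamma_s(T),[\,\cdot\,,\,\cdot\,]^\prime_\Gamma)$ abstractly (an intrinsic completion), the paper constructs $V$ concretely through the Riesz representation $\gamma_T[x,y]=\braket{x\,,\,Fy}_T$ in the graph-norm Hilbert space $\msH_T$ (its Lemma~2) and realizes $\msE$ as $\bar{\msR}_{\sqrt{F}}$ with $\Gamma_{01}=U_F\sqrt{F}$; this buys the explicit link with $\sqrt{F_b}$ of Brown et al., while your route is shorter and equivalent (the paper's Lemma~3 makes the intrinsic-completion view official). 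Second, the paper defines $N$ by the graph-norm orthogonal complement $N=T\vrt\msH_T\om\msD_S$ and only asserts $\msD_N=(JT+\img I)^{-1}(\msN_\img\cap\msR_{JT+\img I})$ at the very end; your von Neumann/range-identity computation $\msR_{JT+\img I}=(JS+\img I)\msD_S\op(\msN_\img\cap\msR_{JT+\img I})$ supplies a detail the paper leaves implicit, and your check that $(JT+\img I)^{-1}P_\img(JT+\img I)$ is the projection onto $\msD_N$ along $\msD_S$ is exactly the content of the paper's one-line argument for c). The one genuine omission is in b): since the statement identifies $\msE$ with $(\msD_{\bar{\gamma}_N},(\,\cdot\,,\,\cdot\,)^\sim_N)$, you still must show that $\gamma_N$ is closable (the paper uses $\gamma_N[x]\le2\norm{x}\,\norm{Nx}$) and that $(\msD_{\bar{\gamma}_N},\abs{\,\cdot\,}^\sim_N)$ is complete with $\msD_N$ dense --- the content of the paper's fourth (unnumbered) lemma; your phrase \enquote{passing to completions identifies $\msE$ with $(\msD_{\bar{\gamma}_N},(\,\cdot\,,\,\cdot\,)^\sim_N)$} assumes precisely this identification rather than proving it.
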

Any closed symmetric operator $S$ in $\msH$
has a unitary boundary pair (ubp)
$(\msH^\prime,\Gamma)$, where
$\Gamma$ is a unitary relation from
a $J_\Gamma$-space to a $J^\prime_\Gamma$-space,
such that $\bar{\msD}_\Gamma=S^c$;
an overbar refers to the closure.
Without loss of generality one chooses
a ubp with the closed domain, $\msD_\Gamma=S^c$.

If $S$ has equal defect numbers
$\dim\msN_\img=\dim\msN_{-\img}$
($\msN_{-\img}=\Ker_{-\img}JS^c$),
a ubp $(\msH^\prime,\Gamma)$ can be chosen
as an ordinary boundary triple (obt), \ie a ubp
with $\Gamma$ surjective.

The above classic facts on boundary value spaces
of symmetric relations, and much more, can be found
in \cite{Behrndt20,Derkach17,Derkach17b,Derkach12,Behrndt11,Derkach06},
and we skip the details.

For the theory of operators in indefinite inner
product spaces we refer to monographs
\cite{Azizov89,Bognar74}. We only mention
basic notions used in the theorem.
An operator
$U$ from a $W_1$-space
$(\msH_1,[\,\,\cdot\,\,,\,\,\cdot\,\,]_1)$
to a $W_2$-space
$(\msH_2,[\,\,\cdot\,\,,\,\,\cdot\,\,]_2)$
is isometric if
$[Ux_1\,,\,Ux_1]_2=[x_1\,,\,x_1]_1$
($x_1\in\msD_U$);
$U$ is semi-unitary if
it is isometric and $\msD_U=\msH_1$;
$U$ is unitary if it is semi-unitary and
$\msR_U=\msH_2$.
The spaces $\msH_1$ and $\msH_2$ are isometrically
isomorphic if there is a unitary operator
$U$ which is injective
(and hence bijective).

A $W$-space becomes a $G$-space if its Gram
operator has trivial kernel.
The Gram operator of
the $G$-space
$(\msL_T,[\,\,\cdot\,\,,\,\,\cdot\,\,]^\prime_\Gamma)$
in a)
reads
\[
G=P_{\msL_T}J^\prime_\Gamma\vrt\msL_T\,,\quad
\msL_T=\Gamma_s(T)
\]
where $P_{\msL_T}$ is an orthogonal projection
in
$[\,\,\cdot\,\,,\,\,J^\prime_\Gamma\,\,\cdot\,\,]^\prime_\Gamma$
onto $\msL_T$.

The normed spaces $\msH_1$, $\msH_2$
are isomorphic if there is a closed continuous
and continuously invertible bijection,
\ie a homeomorphism, $U\co\msH_1\lto\msH_2$.
Particularly isometrically isomorphic normed
spaces are isomorphic.
Throughout all normed spaces are (pre-)Hilbert spaces
via polarization identity.
\section{Proofs}
\begin{lem}\label{lem:1}
Given a subspace $T$ in
$(\msH_\Gamma,[\,\,\cdot\,\,,\,\,\cdot\,\,]_\Gamma)$,
there is a Hilbert space
$(\msH^\prime,
\braket{\,\,\cdot\,\,,\,\,\cdot\,\,}^\prime)$
and a bounded operator
$\bigl(\begin{smallmatrix}
\Gamma_0 \\ \Gamma_1\end{smallmatrix}\bigr)
\in\msB(T,\msH^\prime_\Gamma)$
such that
\begin{equation}
[x\,,\,y^\prime]-[x^\prime\,,\,y]=
\braket{\Gamma_0\hx\,,\,\Gamma_1\hy}^\prime-
\braket{\Gamma_1\hx\,,\,\Gamma_0\hy}^\prime
\label{eq:bvs3}
\end{equation}
for all $\hx=\bigl(\begin{smallmatrix}x \\ x^\prime
\end{smallmatrix}\bigr)\in T$,
$\hy=\bigl(\begin{smallmatrix}y \\ y^\prime
\end{smallmatrix}\bigr)\in T$.

Specifically, if $\Gamma_s$ is the operator
part of a boundary relation $\Gamma$ in a ubp
$(\msH^\prime,\Gamma)$ for $S^c=\msD_\Gamma$,
where $S=T\cap T^c$ is the isotropic part of $T$, then
$\bigl(\begin{smallmatrix}
\Gamma_0 \\ \Gamma_1\end{smallmatrix}\bigr)=\Gamma_s\vrt T$
and is a closed continuous and unitary operator
from the indefinite inner product space
$(T,[\,\,\cdot\,\,,\,\,\cdot\,\,]_\Gamma)$
to the $G$-space
$(\msL_T,
[\,\,\cdot\,\,,\,\,\cdot\,\,]^\prime_\Gamma)$
with
\[
\msL_T=\Gamma_s(T)=\msR_{\bigl(\begin{smallmatrix}
\Gamma_0 \\ \Gamma_1\end{smallmatrix}\bigr)}\,.
\]
\end{lem}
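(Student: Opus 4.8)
The plan is to build the Hilbert space $\msH^\prime$ and the map $\bigl(\begin{smallmatrix}\Gamma_0\\\Gamma_1\end{smallmatrix}\bigr)$ out of a unitary boundary pair for the isotropic part $S=T\cap T^c$. First I would observe that $S$ is a closed symmetric operator (its graph is a neutral subspace of $\msH_\Gamma$, being contained in its own $J_\Gamma$-orthogonal complement), so the classical existence result quoted after the theorem applies: there is a ubp $(\msH^\prime,\Gamma)$ with $\Gamma$ a unitary relation from the $J_\Gamma$-space $\msH_\Gamma$ to the $J^\prime_\Gamma$-space $\msH^\prime_\Gamma$ and with closed domain $\msD_\Gamma=S^c$. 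Since $T\subseteq S^c$ (because $S=T\cap T^c\subseteq T$ forces $T^c\supseteq S^c{}^c=S^{cc}\supseteq$ the closure of $S$, and dually $T=T^{cc}\subseteq S^c$ once one checks $T$ is closed), the restriction of $\Gamma$ to the graph $T$ is defined on all of $T$.

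Next I would pass from the boundary \emph{relation} $\Gamma$ to its \emph{operator part} $\Gamma_s$. The defining property of a unitary boundary pair is exactly the abstract Green identity: for $\hx,\hy\in\msD_\Gamma$ one has $[x\,,\,y^\prime]_{}-[x^\prime\,,\,y]_{}$ (the $J_\Gamma$-form on $\msH_\Gamma$, up to the sign/factor conventions fixed in the excerpt) equal to the corresponding $J^\prime_\Gamma$-form of $\Gamma\hx$ and $\Gamma\hy$ in $\msH^\prime_\Gamma$, i.e. $\braket{\Gamma_0\hx\,,\,\Gamma_1\hy}^\prime-\braket{\Gamma_1\hx\,,\,\Gamma_0\hy}^\prime$. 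Restricting to $\hx,\hy\in T$ gives \eqref{eq:bvs3} verbatim. Here I must be careful that the multivalued part of $\Gamma$ does not interfere: the indefinite form $[(u_1,v_1)\,,\,(u_2,v_2)]^\prime_\Gamma$ annihilates the isotropic (multivalued) directions of the relation, so replacing $\Gamma$ by its single-valued operator part $\Gamma_s$ leaves the right-hand side of \eqref{eq:bvs3} unchanged, and $\Gamma_s\vrt T$ is a genuine operator with range $\msL_T=\Gamma_s(T)$.

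Then I would establish the structural claims about $\bigl(\begin{smallmatrix}\Gamma_0\\\Gamma_1\end{smallmatrix}\bigr)=\Gamma_s\vrt T$. Continuity (boundedness in the graph norm of $\msH_\Gamma$, equivalently as an operator $T\to\msH^\prime_\Gamma$) and closedness are inherited from the corresponding properties of the ubp $\Gamma$ together with the fact that $T$ is a closed subspace of $\msH_\Gamma$ on which $\Gamma$ restricts. For the unitarity from $(T,[\,\cdot\,,\,\cdot\,]_\Gamma)$ onto the $G$-space $(\msL_T,[\,\cdot\,,\,\cdot\,]^\prime_\Gamma)$: isometry is precisely \eqref{eq:bvs3} rewritten with the $\Gamma$-metrics; surjectivity onto $\msL_T$ holds by definition of $\msL_T$; and injectivity follows because $\Ker(\Gamma_s\vrt T)\subseteq\Ker\Gamma=S$ (the kernel of the boundary relation is the minimal operator), and $S\cap T$ is mapped to $0$ but $S$ sits in $T$ as its isotropic part, so any element of $T$ killed by $\Gamma_s$ lies in $S=T\cap T^c$, hence in the isotropic subspace of $(T,[\,\cdot\,,\,\cdot\,]_\Gamma)$ — meaning the induced map on the quotient by the isotropic part is injective, which is what unitarity between these $G$-spaces requires. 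Finally, that $(\msL_T,[\,\cdot\,,\,\cdot\,]^\prime_\Gamma)$ is a $G$-space, not merely a $W$-space, i.e. that its Gram operator $G=P_{\msL_T}J^\prime_\Gamma\vrt\msL_T$ has trivial kernel, follows from this same injectivity together with the isometry: a vector isotropic in $\msL_T$ pulls back to a vector isotropic in $(T,[\,\cdot\,,\,\cdot\,]_\Gamma)$, i.e. into $S$, and is therefore $0$ in $\msL_T$.

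I expect the main obstacle to be the bookkeeping around the multivalued part of the boundary relation $\Gamma$ and the isotropic part of $T$ simultaneously: one has to check that restricting $\Gamma$ to $T$ and then passing to the operator part $\Gamma_s$ are compatible operations, that the resulting kernel is exactly $S$ (so that injectivity modulo isotropic parts is clean), and that all the "closed continuous unitary'' adjectives survive the restriction. The sign conventions relating $[\,\cdot\,,\,\cdot\,]_\Gamma$ on $\msH_\Gamma$ to the raw expression $[x\,,\,y^\prime]-[x^\prime\,,\,y]$ in \eqref{eq:bvs3}, and likewise on the $\msH^\prime$ side, also need to be tracked carefully so that the Green identity comes out with the stated form rather than its negative or a factor of $\img$ off.
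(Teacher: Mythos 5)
Your proposal follows essentially the same route as the paper: take a ubp $(\msH^\prime,\Gamma)$ with closed domain $S^c$ for the isotropic part $S=T\cap T^c$, restrict to $T$, use the Green identity together with the fact that $\Ind\Gamma$ is isotropic in $\msR_\Gamma$ to pass to the operator part, and identify $\Ker(\Gamma_s\vrt T)=S$ to conclude that $\msL_T$ is non-degenerate. The only quibble is terminological: in the paper's convention \enquote{unitary} does not require injectivity (and $(T,[\,\,\cdot\,\,,\,\,\cdot\,\,]_\Gamma)$ is a possibly degenerate $W$-space rather than a $G$-space), so your remarks about injectivity modulo the isotropic part are not needed for unitarity, though the underlying kernel computation is exactly what the paper uses for non-degeneracy of $\msL_T$.
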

A subspace is a closed lineal.
Observe that in the lemma $T$
is an arbitrary closed relation in $\msH$.
For the column notation of operators we refer to
\cite{Hassi20}.
\begin{proof}
Let $S=T\cap T^c$, the isotropic part of $T$ in $\msH_\Gamma$;
hence there is a ubp
$(\msH^\prime,\Gamma)$ for $S^c=\msD_\Gamma(\supseteq T)$.
That $\Gamma$ is isometric explicitly reads
\[
[\hx\,,\,\hy]_\Gamma=[\hu\,,\,\hv]^\prime_\Gamma
\]
for all $(\hx,\hu)\in\Gamma$, $(\hy,\hv)\in\Gamma$.

That $\Gamma$ is unitary implies particularly it is a closed
relation. Let
$\Gamma_s$ be the operator part of $\Gamma$;
then $\Gamma_s\vrt T$ is the operator part of a closed
relation
$\Gamma\vrt T$, and is therefore a bounded
on $\msD_{\Gamma\vrt T}=T$ isometric operator
$\msH_\Gamma\lto\msH^\prime_\Gamma$.

The range $\Gamma_s(T)$ is a subspace in
$\msH^\prime_\Gamma$, since such is
$\Gamma(T)$.

Since
$\Ind \Gamma$
is an isotropic subspace in $\msR_\Gamma$,
the above Green identity implies
\[
[\hx\,,\,\hy]_\Gamma=[(\Gamma_s\vrt T)\hx\,,
\,(\Gamma_s\vrt T)\hy]^\prime_\Gamma
\]
for all $\hx\in T$, $\hy\in T$.

To accomplish the proof it remains to observe,
first, that the operator $\Gamma_s\vrt T$
can be given the (graph) form
\[
\Gamma_s\vrt T=
\begin{pmatrix}
\Gamma_0 \\ \Gamma_1\end{pmatrix}
\]
where operators
$\Gamma_0=\pi_0\Gamma_s\vrt T$ and
$\Gamma_1=\pi_1\Gamma_s\vrt T$ and
\[
\begin{split}
&\pi_0\co\msH^\prime_\Gamma\lto\msH^\prime\,,\quad
\begin{pmatrix}u \\ u^\prime\end{pmatrix}\mapsto u\,;
\\
&\pi_1\co\msH^\prime_\Gamma\lto\msH^\prime\,,\quad
\begin{pmatrix}u \\ u^\prime\end{pmatrix}\mapsto u^\prime
\end{split}
\]
and, second, that
$(\msL_T,[\,\,\cdot\,\,,\,\,\cdot\,\,]^\prime_\Gamma)$
is a non-degenerate subspace, because
its isotropic part is $\Gamma_s(S)$, while
$S=\Ker\Gamma_s=\Ker\Gamma$.
\end{proof}

Let $T$ be a closed
operator in a $J$-space $\msH$, and consider the Hilbert space
\[
\msH_T=(\msD_T,\braket{\,\,\cdot\,\,,\,\,\cdot\,\,}_T)
\]
with scalar product
\[
\braket{x\,,\,y}_T=\braket{x\,,\,y}+
\braket{Tx\,,\,Ty}\,,\quad
\braket{x\,,\,y}=[x\,,\,Jy]
\]
and graph norm
\[
\norm{x}_T=(\norm{x}^2+\norm{Tx}^2)^{1/2}\,,
\quad\norm{x}=\braket{x\,,\,x}^{1/2}\,.
\]

The form $\gamma_T$ is bounded on $\msH_T$,
$\abs{\gamma_T[x]}\leq2\norm{x}^2_T$,
by Cauchy--Schwarz, so
Reisz's representation theorem tells us that
\[
\gamma_T[x,y]=\braket{x\,,\,Fy}_T
\]
for some self-adjoint operator $F$ in $\msH_T$,
which is bounded, $F\in\msB(\msH_T)$.
We state this as a separate lemma.
\begin{lem}\label{lem:2}
Let $T$ be a closed operator in $\msH$.
Then there is a self-adjoint operator
$F\in\msB(\msH_T)$ such that
\[
[x\,,\,Ty]-[Tx\,,\,y]=
\img\braket{x\,,\,Fy}_T\quad
(x,y\in\msD_T)\,.
\]
\end{lem}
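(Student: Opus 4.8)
The plan is simply to recognize $\gamma_T$ as a bounded Hermitian sesquilinear form on the Hilbert space $\msH_T$ and to invoke the Riesz representation theorem, exactly as indicated in the paragraph preceding the statement. The first point to pin down is that $\msH_T$ really is a Hilbert space: the form $\braket{\,\,\cdot\,\,,\,\,\cdot\,\,}=[\,\,\cdot\,\,,\,\,J\,\,\cdot\,\,]$ is a genuine (positive definite) scalar product because $J$ is a canonical symmetry, and $(\msD_T,\norm{\,\,\cdot\,\,}_T)$ is complete precisely because $T$ is assumed closed. This is the only place the closedness hypothesis is used.

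Next I would check the two properties of $\gamma_T$ as a form on $\msH_T$. Boundedness: for $x,y\in\msD_T$ write $[x\,,\,Ty]=\braket{x\,,\,JTy}$ and use that $J$ is $\braket{\,\,\cdot\,\,,\,\,\cdot\,\,}$-isometric, so by Cauchy--Schwarz $\abs{[x\,,\,Ty]}\leq\norm{x}\,\norm{Ty}\leq\norm{x}_T\norm{y}_T$, and symmetrically $\abs{[Tx\,,\,y]}\leq\norm{x}_T\norm{y}_T$; hence $\abs{\gamma_T[x\,,\,y]}\leq 2\norm{x}_T\norm{y}_T$. Hermiticity: since the inner product is conjugate-linear in its first factor one has $\ol{[a\,,\,b]}=[b\,,\,a]$, and a one-line computation from the definition of $\gamma_T$ gives $\ol{\gamma_T[y\,,\,x]}=\gamma_T[x\,,\,y]$.

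Then I would apply the Riesz representation theorem on the Hilbert space $\msH_T$ to produce a unique $F\in\msB(\msH_T)$ with $\gamma_T[x\,,\,y]=\braket{x\,,\,Fy}_T$ for all $x,y\in\msD_T$; the Hermiticity of $\gamma_T$ forces $F=F^\ast$ in $\msH_T$. Unwinding $\gamma_T[x\,,\,y]=-\img([x\,,\,Ty]-[Tx\,,\,y])$ then gives $[x\,,\,Ty]-[Tx\,,\,y]=\img\gamma_T[x\,,\,y]=\img\braket{x\,,\,Fy}_T$, which is the asserted identity; note that $Fy\in\msD_T$ automatically, since the underlying set of $\msH_T$ is $\msD_T$, so the right-hand side makes sense.

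There is no real obstacle: the argument is a direct application of Riesz representation once the ambient space has been correctly identified as a Hilbert space. The only steps demanding a moment's attention are the completeness of $\msH_T$ (equivalently, closedness of $T$) and the bookkeeping of the conjugate-linear-in-the-first-slot convention when verifying that the form, and hence $F$, is self-adjoint.
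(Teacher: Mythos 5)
Your proof is correct and follows essentially the same route as the paper: view $\gamma_T$ as a bounded Hermitian sesquilinear form on the Hilbert space $\msH_T$ (completeness coming from the closedness of $T$), bound it by Cauchy--Schwarz, and apply the Riesz representation theorem to obtain the bounded self-adjoint $F$. The extra details you supply (positive definiteness of $[\,\,\cdot\,\,,\,\,J\,\,\cdot\,\,]$ and the Hermiticity check under the conjugate-linear-in-the-first-slot convention) are exactly the bookkeeping the paper leaves implicit.
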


Let $T$ be a closed operator in $\msH$,
$S(=T\vrt\Ker\gamma_T)$ be its symmetric part,
and define the closed operator restriction
\[
N=T\vrt\msH_T\om\msD_S\,.
\]
Note: $T=S\hop N$
(orthogonal componentwise sum);
$\msD_S\cap\msD_N=\{0\}$.

Let $\Gamma_s$ and $\msL_T=\Gamma_s(T)=\Gamma_s(N)$
be as in Lemma~\ref{lem:1}.
Then
\[
\vp=E^{-1}(\Gamma_s\vrt N)^{-1}
\]
is a bijection $\msL_T\lto\msD_N$.

If $T$ in Lemma~\ref{lem:2} is
dissipative,
then $F\geq0$ and
\[
\gamma_T[x]=\norm{\sqrt{F}\,x}^2_T\quad(x\in\msD_T)
\]
and a self-adjoint operator $\sqrt{F}\in\msB(\msH_T)$
is parametrized by
\[
\sqrt{F}=V_F\Gamma_sE\,,\quad
V_F=\sqrt{F}\,\vp\,.
\]
Because
\[
\norm{V_F\hu}^2_T=[\hu\,,\,\hu]^\prime_\Gamma
\quad(\hu\in\msL_T)
\]
the operator $V_F$ with
\[
\msD_{V_F}=\msL_T\,,\quad
\msR_{V_F}=\msR_{\sqrt{F}}
\]
is closed continuous, injective,
and semi-unitary from the positive $G$-space
$(\msL_T,
[\,\,\cdot\,\,,\,\,\cdot\,\,]^\prime_\Gamma)$
to the Hilbert subspace
$(\bar{\msR}_{\sqrt{F}},
\braket{\,\,\cdot\,\,,\,\,\cdot\,\,}_T)$
of $\msH_T$;
$\msL_T$ is non-negative and then positive
in $\msH^\prime_\Gamma$
since $T$ is nonnegative in $\msH_\Gamma$.

With $T$ as previously,
let
$(\msE,\braket{\,\,\cdot\,\,,\,\,\cdot\,\,}_\msE)$
be a Hilbert space isometrically isomorphic to
the Hilbert space
$(\bar{\msR}_{\sqrt{F}},
\braket{\,\,\cdot\,\,,\,\,\cdot\,\,}_T)$,
and let $U_F\co\bar{\msR}_{\sqrt{F}}\lto\msE$
be the (standard) unitary operator.
Put
\[
\Gamma_{01}=U_F\sqrt{F}=
V\Gamma_sE\,,
\quad V=U_FV_F
\]
and note that the lineal $\msR_{\Gamma_{01}}=\msR_V$
is dense in $\msE$.

The latter combined with Lemmas~\ref{lem:1}, \ref{lem:2}
proves, first, the existence of a boundary
pair $(\msE,\Gamma_{01})$ in Theorem~\ref{thm:1}
and, second, the form of $\Gamma_{01}$ in a).
\begin{rems}
\begin{SL}
\item[a)]
As a simple corollary of \eqref{eq:bvs2}:
\\
Real eigenvalues of a closed dissipative operator, $T$,
are precisely those of its symmetric part, $S$.
\\
To see this use
$2(\im\lambda)[x\,,\,x]=\norm{\Gamma_{01}x}^2_\msE$
($x\in\Ker_\lambda T$, $\lambda\in\sigma_p(T)$)
and $\Ker\Gamma_{01}=\msD_S$.
\item[b)]
As it appears from \eqref{eq:bvs2},
in applications it is enough to have established
the domain restriction of $\Gamma_s$ to $T$, which
usually is done via \eqref{eq:bvs3}.
\item[c)]
Consider a pre-Hilbert space
$(\msR_F,(\,\,\cdot\,\,,\,\,\cdot\,\,)_F)$
with scalar product
\[
(u_x\,,\,u_y)_F=\braket{x\,,\,Fy}_T\,;\,
u_x=Fx\,,\,u_y=Fy\,;\,x,y\in\msH_T\,.
\]
Its completion is by \cite[Lemma~2.4]{Curgus03}
a Hilbert space
$(\msR_{\sqrt{F}},(\,\,\cdot\,\,,\,\,\cdot\,\,)^\sim_F)$
with scalar product
\[
(\sqrt{F}x\,,\,\sqrt{F}y)^\sim_F=
\braket{x\,,\,P_Fy}_T\quad
(x,y\in\msH_T)
\]
where $P_F$ is an orthogonal projection in
$\msH_T$ onto $\bar{\msR}_{\sqrt{F}}$,
which is continuously embedded in the Hilbert space
$\msH_T$.
Let $\iota$ be the corresponding embedding
and let $\iota^*\co\msH_T\lto\msR_{\sqrt{F}}$ be its
adjoint, \ie
$(u\,,\,\iota^*x)^\sim_F=\braket{\iota u\,,\,x}_T$
($x\in\msH_T$, $u\in\msR_{\sqrt{F}}$).
Then by \cite[Theorem~2.7]{Curgus03}
$F=\iota\iota^*$.
\end{SL}
\end{rems}

Next we characterize $\msE$ in Theorem~\ref{thm:1} b).

An intrinsic completion
of the positive $G$-space $\msL_T$ in $\msH^\prime_\Gamma$
is the completion
with respect to the intrinsic norm
\[
\abs{\hu}_{\msL_T}=
([\hu\,,\,\hu]^\prime_\Gamma)^{1/2}
\quad(\hu\in\msL_T)\,.
\]
Such a completion of $\msL_T$ is unique in the sense that
any other intrinsic completion
is isometrically isomorphic to the given one,
with an isomorphism which acts as the identity on
$\msL_T$; see \cite{Curgus03} for details.
\begin{lem}\label{lem:3}
$(\msE,\braket{\,\,\cdot\,\,,\,\,\cdot\,\,}_\msE)$
is an intrinsic completion of
$(\msL_T,[\,\,\cdot\,\,,\,\,\cdot\,\,]^\prime_\Gamma)$.
\end{lem}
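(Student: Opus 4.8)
The strategy is to exhibit $\msE$ as a completion of $\msL_T$ with respect to its intrinsic norm $\abs{\cdot}_{\msL_T}$ and then invoke the uniqueness statement quoted above (from \cite{Curgus03}) to conclude that $\msE$ is an \emph{intrinsic} completion in the required sense. The bridge is the operator $V=U_FV_F\co\msL_T\lto\msE$ already constructed before the statement of the lemma.

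First I would recall the two facts established in the text preceding the lemma: (i) $\msL_T$ is a positive $G$-space in $\msH^\prime_\Gamma$ (it is non-negative because $T$ is non-negative in $\msH_\Gamma$, and non-degenerate by Lemma~\ref{lem:1}), so the intrinsic norm $\abs{\hu}_{\msL_T}=([\hu\,,\,\hu]^\prime_\Gamma)^{1/2}$ is a genuine norm on $\msL_T$; and (ii) $V$ is closed, continuous, injective, semi-unitary from $(\msL_T,[\,\,\cdot\,\,,\,\,\cdot\,\,]^\prime_\Gamma)$ to $\msE$, with dense range $\msR_V=\msR_{\Gamma_{01}}$ in $\msE$. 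Semi-unitarity gives the isometry $\norm{V\hu}^2_\msE=[\hu\,,\,\hu]^\prime_\Gamma=\abs{\hu}_{\msL_T}^2$ for all $\hu\in\msL_T$, i.e. $V$ is a linear isometry of the normed space $(\msL_T,\abs{\cdot}_{\msL_T})$ onto a dense subspace of the Hilbert space $\msE$. By the standard characterization of completions, any Hilbert space containing an isometric dense image of a given pre-Hilbert space \emph{is} (isometrically isomorphic to) its completion; hence $\msE$ together with $V$ realizes the completion of $(\msL_T,\abs{\cdot}_{\msL_T})$.

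Next I would check that this completion is \emph{intrinsic} in the precise sense recalled before the lemma, namely that the identification acts as the identity on $\msL_T$. The completion $(\msD_{\bar{\gamma}_N},(\,\,\cdot\,\,,\,\,\cdot\,\,)^\sim_N)$ is one concrete model of the intrinsic completion; the uniqueness statement from \cite{Curgus03} says that any two intrinsic completions are isometrically isomorphic via an isomorphism that restricts to the identity on $\msL_T$. Since $V$ embeds $\msL_T$ isometrically as a dense subspace of $\msE$, the composite of $V$ (viewed as an embedding of $\msL_T$ into $\msE$) with the inverse of the embedding of $\msL_T$ into its intrinsic completion extends, by density and isometry, to an isometric isomorphism between $\msE$ and that intrinsic completion which is the identity on $\msL_T$. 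Therefore $\msE$ satisfies the defining property of an intrinsic completion of $(\msL_T,[\,\,\cdot\,\,,\,\,\cdot\,\,]^\prime_\Gamma)$, which is exactly the claim.

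The only genuinely delicate point is bookkeeping about \emph{which} copy of $\msL_T$ sits inside $\msE$: strictly, $\msE$ was built as an isometric-isomorphic copy of $\bar{\msR}_{\sqrt F}\subseteq\msH_T$ via the unitary $U_F$, and the map $V_F=\sqrt F\,\vp$ involves the bijection $\vp\co\msL_T\to\msD_N$ and the inclusion $\msD_N\subseteq\msH_T$. So I would be careful to track that $V$ is indeed isometric from the \emph{intrinsic} norm on $\msL_T$ (not merely continuous in some graph norm) — but this is precisely the identity $\norm{V_F\hu}^2_T=[\hu\,,\,\hu]^\prime_\Gamma$ already recorded, composed with the unitarity of $U_F$. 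Once that identity is in hand, everything else is the routine "a dense isometric image determines the completion, and intrinsic completions are unique" argument, so no further obstacle is expected.
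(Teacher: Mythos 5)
Your proof is correct and follows essentially the same route as the paper: everything reduces to the fact that $V$ is a (closed) semi-unitary map of $\msL_T$ onto a dense lineal of $\msE$, which is precisely the characterization of an intrinsic completion that the paper takes from Bogn\'ar's Theorem~V.2.1 and that you re-derive by hand via uniqueness of completions. The only point you omit is the paper's closing observation that an \emph{arbitrary} boundary pair $(\msE^\prime,\Gamma^\prime_{01})$ --- not just the one built from $U_FV_F$ --- is related to the constructed one by a unitary $\tV$, so its space is likewise an intrinsic completion; this matters if the lemma is read as a statement about every boundary pair rather than only the constructed $\msE$.
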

\begin{proof}
By \cite[Theorem~V.2.1]{Bognar74}
an intrinsic completion of
$(\msL_T,[\,\,\cdot\,\,,\,\,\cdot\,\,]^\prime_\Gamma)$
is a Hilbert space,
$(\msE,\braket{\,\,\cdot\,\,,\,\,\cdot\,\,}_\msE)$ say,
such that there is a
(necessarily closed) semi-unitary
operator $\msL_T\lto\msE$, which maps $\msL_T$
bijectively onto a dense lineal in $\msE$;
$V$ in Theorem~\ref{thm:1} a) has the required
properties.

Given the boundary pair $(\msE,\Gamma_{01})$ of $T$,
any other boundary pair
$(\msE^\prime,\Gamma^\prime_{01})$
is of the form $\msE^\prime=\tV(\msE)$,
$\Gamma^\prime_{01}=\tV\Gamma_{01}$
for a unitary operator
$\tV\co\msE\lto\msE^\prime$;
see \cite[Lemma~3.3]{Brown20} or
Remark~\ref{rem:tV} for an explicit form of $\tV$.
Thus
$\msE^\prime$
is an intrinsic completion
of another positive $G$-space, the latter being
isometrically isomorphic to $\msL_T$.
\end{proof}
An equivalent statement in Lemma~\ref{lem:3}
is that
a Banach space $(\msE,\norm{\,\,\cdot\,\,}_\msE)$
with norm $\norm{\,\,\cdot\,\,}_\msE$
induced by the scalar product
$\braket{\,\,\cdot\,\,,\,\,\cdot\,\,}_\msE$
is a completion of the normed space
$(\msL_T,\abs{\,\,\cdot\,\,}_{\msL_T})$.

The norm
\[
\abs{\,\,\cdot\,\,}_{\msL_T}=\abs{\vp\,\cdot}_N
\quad\text{on}\quad\msL_T
\]
or equivalently
\[
\abs{\,\,\cdot\,\,}_N=\abs{\vp^{-1}\,\cdot}_{\msL_T}
\quad\text{on}\quad\msD_N
\]
where the norm
\[
\abs{\,\,\cdot\,\,}_N=
\sqrt{\gamma_N[\,\,\cdot\,\,]}\quad
(\text{with}\quad
\gamma_N=\gamma_T\quad\text{on}\quad
\msD_{\gamma_N}=\msD_N)\,.
\]
That is,
$\vp$ is an isometric isomorphism
from the normed space
$(\msL_T,\abs{\,\,\cdot\,\,}_{\msL_T})$ to the normed
space $(\msD_N,\abs{\,\,\cdot\,\,}_N)$,
which therefore
extends to an isometric isomorphism
from a completion $(\msE,\norm{\,\,\cdot\,\,}_\msE)$
of $(\msL_T,\abs{\,\,\cdot\,\,}_{\msL_T})$ to a completion
of $(\msD_N,\abs{\,\,\cdot\,\,}_N)$.

The next lemma accomplishes the proof of
Theorem~\ref{thm:1} b).
\begin{lem}
$(\msE,\norm{\,\,\cdot\,\,}_\msE)$
is isometrically isomorphic to a completion
of the normed space $(\msD_N,\abs{\,\,\cdot\,\,}_N)$.

Moreover, the space
$(\msD_{\bar{\gamma}_N},\abs{\,\,\cdot\,\,}^\sim_N)$
with norm
\[
\abs{x}^\sim_N=\sqrt{\bar{\gamma}_N[x]}\quad
(x\in\msD_{\bar{\gamma}_N})
\]
is the completion of
$(\msD_N,\abs{\,\,\cdot\,\,}_N)$.
\end{lem}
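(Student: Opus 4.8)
The plan is to establish the two assertions of the lemma in turn. The first assertion---that $(\msE,\norm{\,\,\cdot\,\,}_\msE)$ is isometrically isomorphic to a completion of $(\msD_N,\abs{\,\,\cdot\,\,}_N)$---follows almost immediately from what precedes: Lemma~\ref{lem:3} (in its equivalent normed-space formulation) says $(\msE,\norm{\,\,\cdot\,\,}_\msE)$ is a completion of $(\msL_T,\abs{\,\,\cdot\,\,}_{\msL_T})$, and we have just observed that $\vp\co\msL_T\lto\msD_N$ is an isometric isomorphism of normed spaces. An isometric isomorphism between two normed spaces extends (by density, uniform continuity, and completeness of the target) to an isometric isomorphism between their completions; applying this to $\vp$ and invoking uniqueness of completions up to isometric isomorphism gives the claim. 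I would spell out the extension argument in one or two sentences only, since it is the standard functional-analytic fact.

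For the second, more substantive assertion, the plan is to identify the abstract completion concretely as $(\msD_{\bar\gamma_N},\abs{\,\,\cdot\,\,}^\sim_N)$. Here $\bar\gamma_N$ is the closure of the positive (indeed strictly positive, by construction of $N$ via $\Ker\gamma_T$) form $\gamma_N$, so its form domain $\msD_{\bar\gamma_N}$ carries the scalar product $\bar\gamma_N[\,\cdot\,,\,\cdot\,]$ and the induced norm $\abs{x}^\sim_N=\sqrt{\bar\gamma_N[x]}$. The two things to check are: (i) $(\msD_{\bar\gamma_N},\abs{\,\,\cdot\,\,}^\sim_N)$ is complete; and (ii) $\msD_N$ is a dense subset of $\msD_{\bar\gamma_N}$ in the $\abs{\,\,\cdot\,\,}^\sim_N$-norm, with $\bar\gamma_N$ restricting to $\gamma_N$ on it. Both are part of the standard theory of closed positive forms: a positive form $\gamma_N$ is closable precisely when it admits a smallest closed extension $\bar\gamma_N$, whose form domain $\msD_{\bar\gamma_N}$ is by definition the completion of $\msD_N$ with respect to the form norm, and $\msD_N$ is a core (hence dense) for $\bar\gamma_N$. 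I would cite the same circle of references already used for closed forms (or simply the classical theory, \eg via \cite{Curgus03}) and note that the strict positivity of $\gamma_N$---guaranteed because $\msD_N$ meets $\Ker\gamma_T$ only in $\{0\}$---ensures the form norm is genuinely a norm (not merely a seminorm), so no quotient is needed and $\msD_N$ embeds into $\msD_{\bar\gamma_N}$ rather than into a quotient.

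Putting the pieces together: the completion of $(\msD_N,\abs{\,\,\cdot\,\,}_N)$ is, by uniqueness, isometrically isomorphic to $(\msD_{\bar\gamma_N},\abs{\,\,\cdot\,\,}^\sim_N)$; combined with the first assertion this shows $(\msE,\norm{\,\,\cdot\,\,}_\msE)$ is isometrically isomorphic to $(\msD_{\bar\gamma_N},\abs{\,\,\cdot\,\,}^\sim_N)$, which is exactly the description of $\msE$ promised in Theorem~\ref{thm:1}~b). Since all the normed spaces here are pre-Hilbert via polarization, the isometric isomorphism of norms is automatically an isometric isomorphism of inner product spaces, so $(\msD_N,(\,\,\cdot\,\,,\,\,\cdot\,\,)_N)^\sim=(\msD_{\bar\gamma_N},(\,\,\cdot\,\,,\,\,\cdot\,\,)^\sim_N)\cong\msE$ as Hilbert spaces. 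I do not expect a serious obstacle; the only point requiring a little care is making sure the closability of $\gamma_N$ (inherited from that of $\gamma_T$, which was noted to be closable at the outset) and its strict positivity are both invoked, so that $\bar\gamma_N$ exists \emph{and} its form norm is a norm---this is what lets one take $\msD_{\bar\gamma_N}$ itself, with no identification of null vectors, as the completion.
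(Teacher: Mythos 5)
Your handling of the first assertion matches the paper's route exactly: the paper, too, obtains it from Lemma~\ref{lem:3} (in its normed-space reformulation) together with the observation, made just before the lemma, that $\vp$ is an isometric isomorphism $(\msL_T,\abs{\,\,\cdot\,\,}_{\msL_T})\lto(\msD_N,\abs{\,\,\cdot\,\,}_N)$ which extends to the completions. Nothing to add there.

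The gap is in your point (i) for the second assertion. You assert that completeness of $(\msD_{\bar{\gamma}_N},\abs{\,\,\cdot\,\,}^\sim_N)$ is \enquote{part of the standard theory of closed positive forms}, the form domain of the closure being \enquote{by definition the completion of $\msD_N$ with respect to the form norm}. But the form norm in that standard theory is $\norm{x}_{\bar{\gamma}_N}=(\bar{\gamma}_N[x]+\norm{x}^2)^{1/2}$, whereas the norm in the lemma is the intrinsic norm $\abs{x}^\sim_N=\bar{\gamma}_N[x]^{1/2}$ alone. Closedness of $\bar{\gamma}_N$ gives completeness in the former; it does not give completeness in the latter, and strict positivity does not rescue it. Concretely, take $T=\img A^2$ with $A$ positive self-adjoint, injective, unbounded, with non-closed range: then $\gamma_T[x]=2\norm{Ax}^2$ has trivial kernel, so $S$ is trivial and $N=T$, $\bar{\gamma}_N[x]=2\norm{Ax}^2$ on $\msD_A$, and $(\msD_A,\norm{A\,\cdot\,})$ is isometric to $(\msR_A,\norm{\,\,\cdot\,\,})$, which is not complete; a sequence that is Cauchy in $\abs{\,\,\cdot\,\,}^\sim_N$ need not even be Cauchy in $\msH$, so there is no candidate limit in $\msD_{\bar{\gamma}_N}$. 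What you would need is an extra hypothesis such as $\gamma_N\geq c\norm{\,\,\cdot\,\,}^2$ or, equivalently, one of the conditions of Proposition~\ref{prop:cmpl} (uniform positivity of $\msL_T$). For what it is worth, the paper's own proof of this step shares the difficulty --- it infers $\abs{\,\,\cdot\,\,}^\sim_N$-completeness from $\norm{\,\,\cdot\,\,}_{\bar{\gamma}_N}$-completeness, which only covers sequences Cauchy in the stronger norm --- so you are effectively reproducing the paper's argument; but since you present the step as a citation to standard facts, be aware that the standard facts do not deliver it, and an honest proof must either add a hypothesis or replace $\msD_{\bar{\gamma}_N}$ by the abstract completion.
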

\begin{proof}
The form $\gamma_N$ is closable, since
$\gamma_N[x]\leq2\norm{x}\,\norm{Nx}$
($x\in\msD_{\gamma_N}$).

Since $(\msD_{\bar{\gamma}_N},
\norm{\,\,\cdot\,\,}_{\bar{\gamma}_N})$
is the Banach space with norm
$\norm{x}_{\bar{\gamma}_N}=
(\abs{x}^{\sim\,2}_N+\norm{x}^2)^{1/2}$
($x\in\msD_{\bar{\gamma}_N}$), each sequence
$(x_n)$ in $\msD_{\bar{\gamma}_N}$ converges to
$x\in\msD_{\bar{\gamma}_N}$ in norm
$\norm{\,\,\cdot\,\,}_{\bar{\gamma}_N}$, and then also in
norm $\abs{\,\,\cdot\,\,}^\sim_N$. In particular therefore
a Cauchy sequence $(x_n)$
in $(\msD_{\bar{\gamma}_N},\abs{\,\,\cdot\,\,}^\sim_N)$
is convergent.

That
$(\msD_{\bar{\gamma}_N},\abs{\,\,\cdot\,\,}^\sim_N)$
is the completion of
$(\msD_{\gamma_N},\abs{\,\,\cdot\,\,}_N)$
is seen from the definition:
$\msD_{\bar{\gamma}_N}$ is the set of limits in
$(\msH,\norm{\,\,\cdot\,\,})$ of
Cauchy sequences in
$(\msD_{\gamma_N},\abs{\,\,\cdot\,\,}_N)$; \ie
$\msD_{\gamma_N}$ is dense in
$(\msD_{\bar{\gamma}_N},\abs{\,\,\cdot\,\,}^\sim_N)$
and moreover
$\abs{\,\,\cdot\,\,}^\sim_N=\abs{\,\,\cdot\,\,}_N$
on $\msD_{\gamma_N}$.
\end{proof}
\begin{rems}
As a simple corollary:
\begin{SL}
\item[a)]
$\dim\msE=\dim\msD_{\bar{\gamma}_N}$; hence
$(\msL_T,\abs{\,\,\cdot\,\,}_{\msL_T})$ is
a Banach space iff $\gamma_N=\bar{\gamma}_N$.
\item[b)]
The normed spaces
$(\msR_{\sqrt{F}},\norm{\,\,\cdot\,\,}_T)$
and
$(\msD_N,\abs{\,\,\cdot\,\,}_N)$ and
$(\msL_T,\abs{\,\,\cdot\,\,}_{\msL_T})$
are pairwise isometrically isomorphic.
\\
Since $\Ker\sqrt{F}=\msD_S$ and since
$\msD_S\cap\msD_N$ is trivial,
$(\sqrt{F}\vrt\msD_N)^{-1}$ is such an isometric
isomorphism $\msR_{\sqrt{F}}\lto\msD_N$
by $\gamma_N=\gamma_T$ on
$\msD_N$.
\end{SL}
\end{rems}
The above Banach spaces are Hilbert spaces
via $\gamma_N[\,\,\cdot\,\,,\,\,\cdot\,\,]$.
Particularly
$(\msD_{\bar{\gamma}_N},(\,\,\cdot\,\,,\,\,\cdot\,\,)^\sim_N)$
with scalar product
$(\,\,\cdot\,\,,\,\,\cdot\,\,)^\sim_N=
\bar{\gamma}_N[\,\,\cdot\,\,,\,\,\cdot\,\,]$
is the Hilbert space.

Lastly, in order to see Theorem~\ref{thm:1} c)
use that $\vp\Gamma_sE$ is $0$ on $\msD_S$
and $I$ on $\msD_N$, and then that
$\msD_N=(JT+\img I)^{-1}(\msN_\img\cap\msR_{JT+\img I})$.
\begin{rem}\label{rem:tV}
With the boundary pair in Theorem~\ref{thm:1} c)
$U$ extends by continuity to a unitary operator in
$(\msD_{\bar{\gamma}_N},(\,\,\cdot\,\,,\,\,\cdot\,\,)^\sim_N)$,
which can be set to $I$ without loss of generality;
and similarly for $V$ in a).
Then the operator $\tV=\vp^{\prime\,-1}\vp$
in the proof of Lemma~\ref{lem:3}
is uniquely determined by an extension
$\vp\co\msE\lto\msD_{\bar{\gamma}_N}$
and a similarly defined extension
$\vp^\prime\co\msE^\prime\lto\msD_{\bar{\gamma}_N}$ for
the boundary pair $(\msE^\prime,\Gamma^\prime_{01})$.
\end{rem}
\section{Criterion for completeness}
Let $T$ be a closed dissipative operator in a Krein space $\msH$.
When is the form $\gamma_N$ closed?
Some of the equivalent conditions are:
$\msR_G=\msL_T$;
$\msL_T$ is uniformly positive;
$\msL_T[\hsum]\msL^{[\bot]}_T=\msH^\prime_\Gamma$.
If at least one holds,
the Hilbert space
$(\msL_T,[\,\,\cdot\,\,,\,\,\cdot\,\,]^\prime_\Gamma)
=(\msE,\braket{\,\,\cdot\,\,,\,\,\cdot\,\,}_\msE)$.

For example, in Theorem~\ref{thm:1} a)
\[
\msR_{\Gamma_{01}}=
\msR_V=\msE\quad\text{if}\quad
\dim\msH^\prime<\infty
\]
since a finite-dimensional
non-degenerate subspace is a Krein space
(\cf \cite[Lemma~2.4]{Baidiuk21});
in our case $\msL_T$ is positive, hence a Hilbert space.

The Hilbert space $\msH^\prime$ is finite-dimensional
if \eg $S$ has equal finite defect numbers.

In practice one vaguely determines
$\Gamma_0$ and $\Gamma_1$, and then rigorously verifies
\eqref{eq:bvs3}. Therefore
suppose $\Gamma_0$, $\Gamma_1$ are already
known: We give a necessary and sufficient
condition for $\gamma_N$ to be closed in terms of
these operators.

Define operator restrictions $S_1$, $N_1$ of $T$,
which satisfy $T=S_1\hop N_1$,
by
\[
S_1=\Ker\Gamma_1\,,\quad
N_1=T\vrt\msH_T\om\msD_{S_1}\,.
\]
Define also the operator $\Theta_{01}$
in $\msH^\prime$ by
\[
\Theta_{01}=(\Gamma_0\vrt N_1)
(\Gamma_1\vrt N_1)^{-1}\quad
(\msD_{\Theta_{01}}=\msR_{\Gamma_1})\,.
\]
It is a closed operator, because
the inverse relation $\Theta^{-1}_{01}=\Gamma_s(N_1)$
in $\msH^\prime$ is a subspace in $\msH^\prime_\Gamma$;
$\Theta^{-1}_{01}$ is an operator iff
$S_0\cap N_1=\{0\}$, where
$S_0=\Ker\Gamma_0$.

The adjoint $\Theta^*_{01}$ in $\msH^\prime$
of $\Theta_{01}$ is
a closed relation in $\msH^\prime$
($\Ind\Theta^*_{01}=\msR^\bot_{\Gamma_1}$) with
dense domain.
\begin{prop}\label{prop:cmpl}
The following are equivalent:
\begin{SL}
\item[$\mathrm{a)}$]
$(\msR_{\bigl(\begin{smallmatrix}
\Gamma_0 \\ \Gamma_1\end{smallmatrix}\bigr)},
[\,\,\cdot\,\,,\,\,\cdot\,\,]^\prime_\Gamma)$
is complete, \ie the form $\gamma_N$ is closed;
\item[$\mathrm{b)}$]
$\norm{[ (\Gamma_1-\img\Gamma_0)\vrt N ]
[ (\Gamma_1+\img\Gamma_0)\vrt N ]^{-1}}<1$;
\item[$\mathrm{c)}$]
$\msR_{\Gamma_1}+(\Gamma_0(S_1))^\bot=\msH^\prime=
(\Theta^*_{01}-\Theta_{01})
( (\Gamma_0(S_1))^\bot )+\Gamma_0(S_1)$.
\end{SL}
\end{prop}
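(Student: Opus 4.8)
The plan is to establish the equivalences by routing everything through the Gram operator $G=P_{\msL_T}J^\prime_\Gamma\vrt\msL_T$ of the positive $G$-space $(\msL_T,[\,\,\cdot\,\,,\,\,\cdot\,\,]^\prime_\Gamma)$ and the self-adjoint contraction naturally attached to it. First I would recall (this is the classical fact behind the criterion mentioned just before the proposition) that for a positive $G$-space the following are equivalent: the space is complete in the intrinsic norm; the space is uniformly positive; $\msR_G=\msL_T$; and $\msL_T[\hsum]\msL^{[\bot]}_T=\msH^\prime_\Gamma$. By Lemma~\ref{lem:3} and the identification $\abs{\,\,\cdot\,\,}_{\msL_T}=\abs{\vp\,\cdot}_N$ on $\msL_T$, completeness of $(\msL_T,\abs{\,\,\cdot\,\,}_{\msL_T})$ is in turn equivalent to closedness of $\gamma_N$; this gives a). The main work is then to translate "$\msR_G=\msL_T$" (equivalently, $G$ boundedly invertible, equivalently the self-adjoint part of $J^\prime_\Gamma$ compressed to $\msL_T$ is uniformly positive) into the quantitative conditions b) and c).

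For b) I would use the Cayley-type parametrization of $\msL_T=\Gamma_s(T)=\Gamma_s(N)\dsum\Gamma_s(S)$ and the fact that $\Gamma_s(S)=\Ker\Gamma_s\cap\cdots$ is the isotropic (here trivial, since $\msL_T$ is non-degenerate) part, so that the intrinsic geometry of $\msL_T$ is carried by $\Gamma_s(N)$. Writing an element of $\Gamma_s(N)$ as $\bigl(\begin{smallmatrix}\Gamma_0\\ \Gamma_1\end{smallmatrix}\bigr)x$ with $x\in\msD_N$, the indefinite metric is $[\hu\,,\,\hu]^\prime_\Gamma=\img(\braket{\Gamma_0x\,,\,\Gamma_1x}^\prime-\braket{\Gamma_1x\,,\,\Gamma_0x}^\prime)=2\im\braket{\Gamma_1x\,,\,\Gamma_0x}^\prime$, while the intrinsic Hilbert norm on the ambient $\msH^\prime_\Gamma$ restricted to the graph is $\norm{\Gamma_0x}^{\prime\,2}+\norm{\Gamma_1x}^{\prime\,2}$. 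Introducing $W=[(\Gamma_1-\img\Gamma_0)\vrt N][(\Gamma_1+\img\Gamma_0)\vrt N]^{-1}$ and setting $w=(\Gamma_1+\img\Gamma_0)x$, $Ww=(\Gamma_1-\img\Gamma_0)x$, a direct computation gives $\norm{w}^{\prime\,2}-\norm{Ww}^{\prime\,2}=4\im\braket{\Gamma_1x\,,\,\Gamma_0x}^\prime$ and $\norm{w}^{\prime\,2}+\norm{Ww}^{\prime\,2}=2(\norm{\Gamma_0x}^{\prime\,2}+\norm{\Gamma_1x}^{\prime\,2})$; hence uniform positivity of $\abs{\,\,\cdot\,\,}_{\msL_T}$ relative to the graph norm is exactly $\norm{W}<1$. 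One must check $(\Gamma_1+\img\Gamma_0)\vrt N$ is injective with the stated domain/range so that $W$ is a well-defined everywhere-defined operator; this follows since $\Ker(\Gamma_1+\img\Gamma_0)\cap\msD_N$ corresponds to $\msN_{-\img}\cap\msD_N=\{0\}$ by the definition of $N$ and of $\msN_{-\img}=\Ker_{-\img}JS^c$.

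For c) I would pass to the adjoint picture. The decomposition $\msH^\prime_\Gamma=\overline{\msL_T}[\hsum]\msL^{[\bot]}_T$ holds as a (not necessarily orthogonal) direct sum precisely when the $G$-space is complete; projecting componentwise and using $\Ind\Gamma_s=\{0\}\op\msR^\bot_{\Gamma_1}$ (so $\Ind\Theta^*_{01}=\msR^\bot_{\Gamma_1}$ as stated) one rewrites the first identity $\msR_{\Gamma_1}+(\Gamma_0(S_1))^\bot=\msH^\prime$ as surjectivity of $\bigl(\begin{smallmatrix}\Gamma_0\\ \Gamma_1\end{smallmatrix}\bigr)$ onto a complemented subspace — equivalently $\msD_{\Theta_{01}}=\msR_{\Gamma_1}$ together with $\overline{\msD_{\Theta_{01}}}+(\text{the closed complement})=\msH^\prime$. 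The second identity $(\Theta^*_{01}-\Theta_{01})((\Gamma_0(S_1))^\bot)+\Gamma_0(S_1)=\msH^\prime$ is the statement that $\im\Theta_{01}$ (the imaginary part, suitably interpreted as the closed relation $\tfrac1{2\img}(\Theta^*_{01}-\Theta_{01})$) is boundedly invertible on the relevant reducing subspace — this is the abstract form of "$G$ boundedly invertible" once $\Theta_{01}=(\Gamma_0\vrt N_1)(\Gamma_1\vrt N_1)^{-1}$ is recognized as the Weyl-type function whose imaginary part reproduces $\gamma_N$ up to the isomorphism $\vp$. I would close the loop b)$\Leftrightarrow$c) either directly (a Cayley transform turns $\norm{W}<1$ into bounded invertibility of $I-W^*W$, which upon substituting $W$ yields the $\Theta^*_{01}-\Theta_{01}$ expression) or via a)$\Rightarrow$c)$\Rightarrow$b)$\Rightarrow$a).

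The step I expect to be the main obstacle is c), specifically making the two set-theoretic identities rigorous when $\Theta_{01}$ is merely a closed (possibly unbounded, non-densely-defined-as-an-operator) linear relation: one has to be careful that "$\Theta^*_{01}-\Theta_{01}$" is interpreted as a sum of relations whose range is the algebraic sum, that $(\Gamma_0(S_1))^\bot$ is the correct reducing piece (it is the closure of $\msR_{\Gamma_1\vrt N_1}$-part after removing the multivalued directions), and that the decomposition $T=S_1\hop N_1$ with $S_1=\Ker\Gamma_1$ interacts correctly with the original $T=S\hop N$. Handling the multivalued part $\msR^\bot_{\Gamma_1}=\Ind\Theta^*_{01}$ and verifying that it does not obstruct surjectivity is where the bookkeeping is delicate; everything else is the standard $G$-space dictionary plus the contraction estimate in b).
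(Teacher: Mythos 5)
Your treatment of a) $\Leftrightarrow$ b) is essentially the paper's argument and is correct: both reduce to the fact that a positive subspace of $\msH^\prime_\Gamma$ is uniformly positive iff its angular operator is a strict contraction, and your direct computation of $\norm{w}^{\prime\,2}\mp\norm{Ww}^{\prime\,2}$ is a legitimate (arguably cleaner) way of identifying $W=[(\Gamma_1-\img\Gamma_0)\vrt N][(\Gamma_1+\img\Gamma_0)\vrt N]^{-1}$ with that angular operator. One repair is needed, though: the injectivity of $(\Gamma_1+\img\Gamma_0)\vrt N$ does not come from $\msN_{-\img}\cap\msD_N=\{0\}$, and the kernel of $\Gamma_1+\img\Gamma_0$ on $\msD_\Gamma$ is not a defect subspace. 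The correct reason, and the one the paper uses, is that on the non-negative subspace $T$ one has $\Ker(\Gamma_1+\img\Gamma_0)=S$: if $\Gamma_1\hx=-\img\Gamma_0\hx$ then $[\Gamma_s\hx\,,\,\Gamma_s\hx]^\prime_\Gamma=-2\norm{\Gamma_0\hx}^{\prime\,2}\leq0$, which by non-negativity and non-degeneracy of $\msL_T$ forces $\Gamma_s\hx=0$; combine this with $S\cap N=\{0\}$.

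The genuine gap is in a) $\Leftrightarrow$ c). You correctly identify the framework (view $\msL_T$ as a relation in $\msH^\prime$ graph-parametrized by $\Theta_{01}$ and $\Gamma_0(S_1)$, and translate $\msL_T[\hsum]\msL^{[\bot]}_T=\msH^\prime_\Gamma$ into conditions on ranges and kernels), but the step that carries the whole equivalence is never performed: one must compute the $J^\prime_\Gamma$-orthogonal complement explicitly and show
$\msL^{[\bot]}_T=[\Theta^*_{01}\vrt\msD_{\Theta^*_{01}}\cap(\Gamma_0(S_1))^\bot]^{-1}$,
which follows from the description $(u,v)\in\msL_T\Leftrightarrow u=\Theta_{01}v+\xi$ with $v\in\msR_{\Gamma_1}$, $\xi\in\Gamma_0(S_1)$. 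Only with this identity in hand do the two equalities in c) fall out as $\msR_{\msL_T}+\msR_{\msL^{[\bot]}_T}=\msH^\prime$ and $\Ker(\msL_T[\hsum]\msL^{[\bot]}_T)=\msH^\prime$. Your substitute --- that the second equality expresses bounded invertibility of the ``imaginary part'' of a Weyl-type function $\Theta_{01}$ --- is a heuristic, not a derivation: $\Theta^*_{01}-\Theta_{01}$ enters through the computation of $\msL^{[\bot]}_T$, not through a Weyl-function identity, and you yourself flag exactly this point as the unresolved ``main obstacle.'' As written, part c) is a plan rather than a proof.
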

In b) $\norm{\,\,\cdot\,\,}$ is the operator
sup-norm, and in c)
the action of the relation
$\Theta^*_{01}-\Theta_{01}$ on the orthogonal
complement $(\Gamma_0(S_1))^\bot$ means
the action on the lineal
$\msD_{\Theta_{01}}\cap\msD_{\Theta^*_{01}}\cap
(\Gamma_0(S_1))^\bot$.
\begin{proof}
a) $\Leftrightarrow$ b) This is the statement that
a positive subspace $\msL_T$ in $\msH^\prime_\Gamma$
with the angular operator $K$ is uniformly positive iff
$\norm{K}<1$. It remains to remark, first, that
$K$ in $\msH^\prime_\Gamma$ with domain
$\img I\vrt\msR_{\Gamma_1+\img\Gamma_0}$ maps
$\bigl(\begin{smallmatrix}u \\ \img u\end{smallmatrix} \bigr)$
to
$\bigl(\begin{smallmatrix}2\img v-u \\
\img u+2v\end{smallmatrix} \bigr)$, where
$u=(\Gamma_1+\img\Gamma_0)\hx$,
$v=\Gamma_0\hx$, $\hx\in T$,
and, second, that $\Ker(\Gamma_1+\img\Gamma_0)=S$.

a) $\Leftrightarrow$ c)
Viewing $\msL_T$ as a relation in $\msH^\prime$,
the equality in
$\msL_T[\hsum]\msL^{[\bot]}_T\subseteq\msH^\prime_\Gamma$
(where $\msL^{[\bot]}_T$ is the $J^\prime_\Gamma$-orthogonal
complement of $\msL_T$)
holds iff
\\
$\msR_{\msL_T}+\msR_{\msL^{[\bot]}_T}=\msH^\prime=
\Ker(\msL_T[\hsum]\msL^{[\bot]}_T)$.

Because
$\Ker(\msL_T[\hsum]\msL^{[\bot]}_T)=
\msR_{ (\msL^{[\bot]}_T)^{-1}\msL_T-I }$, so
it suffices to show that
\[
\msL^{[\bot]}_T=[\Theta^*_{01}\vrt\msD_{\Theta^*_{01}}\cap
(\Gamma_0(S_1))^\bot]^{-1}
\]
for the rest is a routine computation.

But the above $\msL^{[\bot]}_T$ is easily seen from
$(u,v)\in\msL_T$ $\Leftrightarrow$
$u=\Theta_{01}v+\xi$, some $v\in\msR_{\Gamma_1}$
and $\xi\in\Gamma_0(S_1)$.
\end{proof}
\begin{rem}
If \eg
$(\msH^\prime,\Gamma)$ is an obt
for $S^c$, this does not necessarily imply
$\msR_{\Gamma_1}=\msH^\prime$, since by definition
$\Gamma_1$ is $\pi_1\Gamma$ restricted to $T\subseteq S^c$.
\end{rem}
\section{Example of application}
Let $\Omega$ be a measurable subset of $[0,\infty)$,
which is not a null set, and
let $\iota$ be the continuous inclusion of
$L^2(\Omega)$ in $L^2(0,\infty)$:
$\ty=\iota y=y$ a.e.\,on $\Omega$ and
$\ty=0$ a.e.\,on $\Omega^c=[0,\infty)\setm\Omega$.
Define the lineal
$\msL_\Omega=\{y\in L^2(\Omega)\vrt\ty\in H^2(0,\infty)\,;\,
\ty^\prime(0)=h\ty(0) \}$, where $h$ is a real number.
Let $q$ be a measurable bounded
complex-valued function on $\Omega$, with
$\im q(t)>0$ for a.e.\,$t\in\Omega$.
\begin{prop}
Let $L=(-\frac{d^2}{dt^2}+q(t))\vrt\msL_\Omega$,
a closed densely defined dissipative operator in
$L^2(\Omega)$. Then the operator norm of its Cayley transform
$\norm{(L-\img I)(L+\img I)^{-1}}=1$.
\end{prop}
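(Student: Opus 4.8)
The plan is to establish $\norm{(L-\img I)(L+\img I)^{-1}}\leq1$ and $\norm{(L-\img I)(L+\img I)^{-1}}\geq1$ separately. For the upper bound I would first record that, expanding norms in $L^2(\Omega)$, every $y\in\msL_\Omega$ satisfies
\[
\norm{(L\pm\img I)y}^2=\norm{Ly}^2+\norm{y}^2\pm\gamma_L[y]\,,\qquad
\gamma_L[y]=-\img(\braket{y,Ly}-\braket{Ly,y})\geq0\,,
\]
the last inequality being the dissipativity of $L$. Hence $\norm{(L-\img I)y}^2=\norm{(L+\img I)y}^2-2\gamma_L[y]\leq\norm{(L+\img I)y}^2$ and $\norm{(L+\img I)y}^2\geq\norm{y}^2$, so $L+\img I$ is injective, $(L-\img I)(L+\img I)^{-1}$ is a genuine operator on $\msR_{L+\img I}$ (whose possible non-density, should $L$ fail to be maximal, is irrelevant to the norm), and
\[
\norm{(L-\img I)(L+\img I)^{-1}}=\sup_{0\neq y\in\msL_\Omega}\frac{\norm{(L-\img I)y}}{\norm{(L+\img I)y}}\leq1\,.
\]
Thus it suffices to find $y_n\in\msL_\Omega$ along which $\gamma_L[y_n]/\norm{(L+\img I)y_n}^2\to0$.

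Next I would compute $\gamma_L$. Given $y\in\msL_\Omega$, so $\ty\in H^2(0,\infty)$ with $\ty=0$ a.e.\ on $\Omega^c$ and $\ty^\prime(0)=h\ty(0)$, the product $\ol{\ty}\,\ty^{\prime\prime}$ (which lies in $L^1(0,\infty)$) vanishes a.e.\ on $\Omega^c$, so the $L^2(\Omega)$-pairing $\braket{y,-\ty^{\prime\prime}}$ equals $\int_0^\infty\ol{\ty}(-\ty^{\prime\prime})\,dt$; integrating by parts on the half-line --- the endpoint term at $+\infty$ vanishing because $\ty,\ty^\prime\in H^1(0,\infty)$ --- this is $h\abs{\ty(0)}^2+\norm{\ty^\prime}^2_{L^2(0,\infty)}$, a real number since $h$ is real. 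Only the potential term then contributes to $\gamma_L$, giving
\[
\gamma_L[y]=2\int_\Omega(\im q)\,\abs{y}^2\,dt\qquad(y\in\msL_\Omega)\,.
\]
This both re-derives dissipativity ($\im q>0$ a.e.) and, $q$ being bounded, shows $0\leq\gamma_L[y]\leq2\norm{q}_\infty\,\norm{y}^2$: the form $\gamma_L$ is bounded relative to the ambient $L^2(\Omega)$-norm, not merely to the graph norm. It also shows $\Ker\gamma_L=\{0\}$, so the symmetric part $S=L\vrt\Ker\gamma_L$ of Theorem~\ref{thm:1} is trivial; in particular the bound above cannot be saturated by a nonzero vector of $\msD_S$, and one is forced to exploit the unboundedness of $L$.

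That $L$ is unbounded is immediate: a closed, densely defined, bounded operator is everywhere defined (its graph, being closed, coincides with that of its bounded extension), whereas $\msD_L=\msL_\Omega$ is a proper subspace of $L^2(\Omega)$, since every $y\in\msL_\Omega$ agrees a.e.\ on $\Omega$ with the bounded continuous function $\ty$, which fails for general elements of $L^2(\Omega)$. I would then pick $y_n\in\msL_\Omega$ with $\norm{y_n}=1$ and $\norm{Ly_n}\to\infty$; since $\norm{(L+\img I)y_n}^2\geq\norm{Ly_n}^2\to\infty$ while $\gamma_L[y_n]\leq2\norm{q}_\infty$,
\[
\frac{\norm{(L-\img I)y_n}^2}{\norm{(L+\img I)y_n}^2}=1-\frac{2\gamma_L[y_n]}{\norm{(L+\img I)y_n}^2}\geq1-\frac{4\norm{q}_\infty}{\norm{Ly_n}^2}\longrightarrow1\,,
\]
which with the upper bound gives $\norm{(L-\img I)(L+\img I)^{-1}}=1$.

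The one genuinely delicate step is the reduction $\braket{y,-\ty^{\prime\prime}}_{L^2(\Omega)}=\int_0^\infty\ol{\ty}(-\ty^{\prime\prime})\,dt$ together with the endpoint bookkeeping in the integration by parts: it rests on $\ty=0$ a.e.\ on $\Omega^c$ (so that $\ol{\ty}\,\ty^{\prime\prime}=0$ a.e.\ there) and on $\ty,\ty^\prime$ vanishing at $+\infty$. Once the formula $\gamma_L[y]=2\int_\Omega(\im q)\,\abs{y}^2\,dt$ is in hand the rest is elementary; the conceptual point is merely that, $S$ being trivial, the Cayley contraction is saturated through $\norm{Ly_n}\to\infty$ rather than through a kernel vector of $\gamma_L$.
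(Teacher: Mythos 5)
Your proof is correct, but it takes a genuinely different and more elementary route than the paper. The paper treats this proposition as an illustration of its abstract machinery: it embeds $L$ into the maximal dissipative operator $T=\tau$ on $L^2(0,\infty)$, identifies $\msD_S$ with the functions vanishing a.e.\ on $\Omega$ and $\msD_N$ with $\iota(\msL_\Omega)$, observes that the form closure satisfies $\msD_{\bar{\gamma}_N}=\iota(L^2(\Omega))\supsetneq\msD_N$ (so $\gamma_N$ is \emph{not} closed), and then invokes the equivalence a) $\Leftrightarrow$ b) of Proposition~\ref{prop:cmpl} with $\Gamma_0\hx=y$, $\Gamma_1\hx=Ly$: closedness of $\gamma_N$ is equivalent to $\norm{(L-\img I)(L+\img I)^{-1}}<1$, so its failure forces the norm to equal $1$. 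You instead saturate the contraction bound by hand: the identity $\norm{(L-\img I)y}^2=\norm{(L+\img I)y}^2-2\gamma_L[y]$, the explicit computation $\gamma_L[y]=2\int_\Omega(\im q)\abs{y}^2\,dt\leq2\norm{q}_\infty\norm{y}^2$ (which agrees with the paper's formula for $\gamma_T$), and a normalized sequence with $\norm{Ly_n}\to\infty$, whose existence you correctly reduce to $\msL_\Omega\subsetneq L^2(\Omega)$ via the closed-graph argument. Both proofs ultimately rest on the same quantitative fact --- $\gamma_L$ is bounded by the $L^2(\Omega)$-norm while $L$ is unbounded, so the positive lineal $\msL_T$ cannot be uniformly positive in the graph topology --- but your version is self-contained and bypasses the boundary-pair and Krein-space apparatus entirely, at the cost of not exhibiting the structural reason (non-closedness of $\gamma_N$, i.e.\ $\msR_{\Gamma_{01}}\neq\msE$) that the paper is designed to showcase. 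All the delicate steps you flag (the reduction of the pairing to $\int_0^\infty\ol{\ty}(-\ty^{\prime\prime})\,dt$, the vanishing of the boundary term at $+\infty$ for $H^2$ functions, the realness of $h\abs{\ty(0)}^2$) check out.
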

For a dissipative $L$, clearly
$(L-\img I)(L+\img I)^{-1}$ is contractive,
so the point here is that the sup-norm is precisely $1$.
\begin{proof}
Denote by the same $q$ a trivial extension of $q$ to
$[0,\infty)$; one can equally assume that
$q$ on $[0,\infty)$ is as in
\cite[Examples~3.4, 4.14]{Brown20}.
In the Hilbert space $\msH=L^2(0,\infty)$
the maximal dissipative operator
$T=\tau=-\frac{d^2}{dt^2}+q(t)$ on
$x\in H^2(0,\infty)$, $x^\prime(0)=hx(0)$.
The quadratic form
$\gamma_T[x]=
2\norm{\sqrt{\im q}\,x}^2_{L^2(\Omega)}$
$(x\in\msD_T)$,
so
$\msD_S=\Ker\gamma_T$ reads
$\{x\in\msD_T\vrt
x=0\,\text{a.e.\,on}\,\Omega \}$, \ie
$\msD_S=\iota_c(\msL^\prime_{\Omega^c})$,
$\msL^\prime_{\Omega^c}=\{y\in L^2(\Omega^c)\vrt
\iota_c y\in\msD_T \}$, where $\iota_c$ is the continuous
inclusion of
$L^2(\Omega^c)$ in $L^2(0,\infty)$:
$\iota_c y=0$ a.e.\,on $\Omega$ and
$\iota_c y=y$ a.e.\,on $\Omega^c$. Then
$\msD_N=\iota(\msL_\Omega)$. In order to see the latter,
let $\Delta_N=\{x\in\msD_T\vrt
x=0\,\text{a.e.\,on}\,\Omega^c \}$. On the one
hand
$\msD_S\cap\Delta_N=\{0\}$, $\msD_S\dsum\Delta_N=\msD_T$;
on the other hand
$\msD_S\dsum\msD_N=\msD_T$,
$\Delta_N\subseteq\msD_N=
\msH_T\om\msD_S$, so $\Delta_N=\msD_N$
with
\[
(\iota y\,,\,\iota y_1)_N=
\gamma_L[y,y_1]\quad
(y,y_1\in\msL_\Omega)\,.
\]

Since $\msD_T\subseteq\msH$ densely,
$\msL_\Omega\subseteq L^2(\Omega)$ densely;
hence
$\msD_{\bar{\gamma}_N}=\iota(L^2(\Omega))$
with
\[
(\iota y\,,\,\iota y_1)^\sim_N=
\braket{y\,,\,y_1}_{L^2(\Omega,2\im q(t)dt)}\quad
(y,y_1\in L^2(\Omega))\,.
\]
(Because the norm
$\norm{\,\,\cdot\,\,}_{L^2(\Omega,2\im q(t)dt)}=
\norm{\sqrt{2\im q}\,\,\cdot\,}_{L^2(\Omega)}$
is equivalent to
$\norm{\,\,\cdot\,\,}_{L^2(\Omega)}$, the Hilbert
space
$(\msD_{\bar{\gamma}_N},
(\,\,\cdot\,\,,\,\,\cdot\,\,)^\sim_N)$
is isomorphic to $L^2(\Omega)=L^2(\Omega,dt)$.)

From the above,
since
$(\iota y\,,\,\iota y_1)_N=(\iota y\,,\,\iota y_1)^\sim_N$
for $y$, $y_1\in\msL_\Omega$, one puts in \eqref{eq:bvs3}
$\Gamma_0\hx=y$, $\Gamma_1\hx=Ly$, where
$\hx=Ex$, $x=x_S+\iota y$, $x_S\in\msD_S$, $y\in\msL_\Omega$,
and then applies Proposition~\ref{prop:cmpl} a), b),
which accomplishes the proof.
\end{proof}
\begin{rem}
In the above, the boundary pair
$(\msE,\Gamma_{01})$ of $T$ reads
$\msE=\msD_{\bar{\gamma}_N}$,
$\Gamma_{01}x=x\vrt\Omega$ ($x\in\msD_T$);
$\Gamma_{01}=V
\bigl(\begin{smallmatrix}\Gamma_0 \\
\Gamma_1 \end{smallmatrix}\bigr)E$ with
$V=\vp\co \msL_T=L\lto\msD_N=\iota(\msL_\Omega)$,
$\bigl(\begin{smallmatrix}y \\
Ly \end{smallmatrix}\bigr)\mapsto\iota y=x\vrt\Omega$.
For illustrative purposes we sketch
the construction of
a ubp $(\msH^\prime,\Gamma)$ for $S^*$.

The operator
$H\co L^2(\Omega)\op L^2(\Omega^c)\lto\msH$,
$\bigl(\begin{smallmatrix}y \\ y_c
\end{smallmatrix}\bigr)\mapsto
\iota y+\iota_cy_c$
defines an isometric isomorphism, and
$H^{-1}TH=\bigl(\begin{smallmatrix}L & 0 \\ 0 & L_c
\end{smallmatrix}\bigr)$ in $L^2(\Omega)\op L^2(\Omega^c)$,
where $L_c=\tau\vrt\msL^\prime_{\Omega^c}$ in
$L^2(\Omega^c)$ is closed densely defined symmetric.
Let $L^*_c$ denote the adjoint operator
and let $(\msH^{\prime\prime},\Gamma^\prime)$
be a ubp for $L^*_c=\msD_{\Gamma^\prime}$.
Then
the adjoint relation $S^*$ in $\msH$ is given by
\[
H^{-1}S^*H=\begin{pmatrix}
L^2(\Omega)\op L^2(\Omega) & 0 \\ 0 & L^*_c\end{pmatrix}
\]
and a ubp $(\msH^\prime,\Gamma)$
for $S^*=\msD_\Gamma$ can be given the form:
$\msH^\prime=\msH^{\prime\prime}\op L^2(\Omega)$ with
\[
\Gamma=\left\{
\left( 
\left( H\begin{pmatrix}y \\ y_c \end{pmatrix},
H\begin{pmatrix}y_1 \\ L^*_cy_c \end{pmatrix}\right),
\left( 
\begin{pmatrix}u \\ y \end{pmatrix},
\begin{pmatrix}v \\ y_1 \end{pmatrix}
\right) \right)\Bigl\vert
y,y_1\in L^2(\Omega)\,; \right.
\]
\[
\left.
\left(\begin{pmatrix}y_c \\ L^*_cy_c \end{pmatrix},
\begin{pmatrix}u \\ v \end{pmatrix} \right)\in
\Gamma^{\prime}
\right\}\,.
\]
By replacing $\Gamma^\prime$ by its operator part
one constructs the operator part $\Gamma_s$ of $\Gamma$;
hence the operator
$\Gamma_s\vrt T=
\bigl(\begin{smallmatrix}
\Gamma_0 \\ \Gamma_1\end{smallmatrix}\bigr)$
maps $\bigl( H\bigl(\begin{smallmatrix}y \\ y_c
\end{smallmatrix}\bigr),
H\bigl(\begin{smallmatrix}Ly \\ L_cy_c
\end{smallmatrix}\bigr)\bigr)\in T$
to $\bigl(\bigl(\begin{smallmatrix}0 \\ y
\end{smallmatrix} \bigr),
\bigl(\begin{smallmatrix}0 \\ Ly
\end{smallmatrix} \bigr) \bigr)\in\msL_T$.

Similar analysis can be done for complex $h$, $\im h>0$,
by replacing $L^2(\Omega,2\im q(t)dt)$ by
$L^2(\Omega\cup\{0\},2\im h\,\delta+2\im q(t)dt)$,
where $\delta$ is the Dirac measure supported at $0$.
\end{rem}

\begin{bibdiv}
\begin{biblist}

\bib{Arlinskii23}{article}{
      author={Arlinskii, Yu.},
       title={Families of symmetric operators with trivial domains of their
  squares},
        date={2023},
     journal={Compl. Anal. Oper. Theory},
      volume={17},
      number={120},
}

\bib{Arlinskii24}{article}{
      author={Arlinskii, Yu.},
       title={Squares of symmetric operators},
        date={2024},
     journal={Compl. Anal. Oper. Theory},
      volume={18},
      number={161},
}

\bib{Azizov89}{book}{
      author={Azizov, T.},
      author={Iokhvidov, I.},
       title={Linear {O}perators in {S}paces with an {I}ndefinite {M}etric},
   publisher={John Wiley \& Sons. Inc.},
        date={1989},
}

\bib{Baidiuk21}{article}{
      author={Baidiuk, D.},
      author={Derkach, V.},
      author={Hassi, S.},
       title={Unitary boundary pairs for isometric operators in {P}ontryagin
  spaces and generalized coresolvents},
        date={2021},
     journal={Compl. Anal. Oper. Theory},
      volume={15},
      number={32},
}

\bib{Behrndt11}{article}{
      author={Behrndt, J.},
      author={Derkach, V.~A.},
      author={Hassi, S.},
      author={de~Snoo, H.},
       title={A realization theorem for generalized {N}evanlinna families},
        date={2011},
     journal={Operators and Matrices},
      volume={5},
      number={4},
       pages={679\ndash 706},
}

\bib{Behrndt20}{book}{
      author={Behrndt, J.},
      author={Hassi, S.},
      author={de~Snoo, H.},
       title={Boundary {V}alue {P}roblems, {W}eyl {F}unctions, and
  {D}ifferential {O}perators},
      series={Monographs in Mathematics, Volume 108},
   publisher={Birkhauser},
     address={Switzerland AG},
        date={2020},
}

\bib{Bognar74}{book}{
      author={Bogn\'ar, J\'anos},
       title={Indefinite inner product spaces},
   publisher={Springer-Verlag Berlin Heidelberg New York},
        date={1974},
}

\bib{Brown20}{article}{
      author={Brown, M.},
      author={Marletta, M.},
      author={Naboko, S.},
      author={Wood, I.},
       title={The functional model for maximal dissipative operators
  (translation form): {A}n approach in the spirit of operator knots},
        date={2020},
     journal={Trans. Amer. Math. Soc.},
      volume={373},
      number={6},
       pages={4145\ndash 4187},
}

\bib{Curgus03}{article}{
      author={\'{C}urgus, B.},
      author={Langer, H.},
       title={Continuous embeddings, completions and complementation in {K}rein
  spaces},
        date={2003},
     journal={Radovi Matemati\v{c}ki},
      volume={12},
       pages={37\ndash 79},
}

\bib{Derkach06}{article}{
      author={Derkach, V.},
      author={Hassi, S.},
      author={Malamud, M.},
      author={de~Snoo, H.},
       title={Boundary relations and their {W}eyl families},
        date={2006},
     journal={Trans. Amer. Math. Soc.},
      volume={358},
      number={12},
       pages={5351\ndash 5400},
}

\bib{Derkach17b}{book}{
      author={Derkach, V.~A.},
      author={Malamud, M.~M.},
       title={Extension theory of symmetric operators and boundary value
  problems},
   publisher={Institute of Mathematics of NAS of Ukraine},
     address={Kiev},
        date={2017 (in Russian)},
      volume={104},
}

\bib{Derkach12}{incollection}{
      author={Derkach, Vladimir},
      author={Hassi, Seppo},
      author={Malamud, Mark},
      author={de~Snoo, Henk},
       title={Boundary triplets and {W}eyl functions. {R}ecent developments},
        date={2012},
   booktitle={Operator {M}ethods for {B}oundary {V}alue {P}roblems, {L}ondon
  {M}ath. {S}oc. {L}ecture {N}ote {S}eries},
      editor={Hassi, Seppo},
      editor={de~Snoo, Hendrik S.~V.},
      editor={Szafraniec, Franciszek~Hugon},
      volume={404},
   publisher={Cambridge University Press, UK},
       pages={161\ndash 220},
}

\bib{Derkach17}{article}{
      author={Derkach, Vladimir},
      author={Hassi, Seppo},
      author={Malamud, Mark~M.},
       title={Generalized boundary triples, {I}. {S}ome classes of isometric
  and unitary boundary pairs and realization problems for subclasses of
  {N}evanlinna functions},
        date={2020},
     journal={Math. Nachr.},
      volume={293},
      number={7},
       pages={1278\ndash 1327},
         url={https://onlinelibrary.wiley.com/doi/abs/10.1002/mana.201800300},
}

\bib{Hassi20}{article}{
      author={Hassi, S.},
      author={Labrousse, J.},
      author={de~Snoo, H.},
       title={Operational calculus for rows, columns, and blocks of linear
  relations},
        date={2020},
     journal={Advances in Operator Theory},
      volume={5},
      number={3},
       pages={1193\ndash 1228},
}

\end{biblist}
\end{bibdiv}

\end{document}